\newtheorem{defi}{Definition}[section]
\newtheorem{theorem}[defi]{Theorem}
\newtheorem{lemma}[defi]{Lemma}
\newtheorem{proposition}[defi]{Proposition}
\newtheorem{remark}[defi]{Remark}
\def\R{\mathbb{R}}
\def\Z{\mathbb{Z}}
\def\N{\mathbb{N}}
\def\C{\mathbb{C}}
\def\a{\alpha}
\def\e{\varepsilon}
\def\la{\lambda}
\def\O{\Omega}
\def\intr{\int_{\R^3}}
\def\dis{\displaystyle}
\begin{document}
\title[On the Schr{\"o}dinger-Poisson-Slater system]
{On the Schr{\"o}dinger-Poisson-Slater system: behavior of minimizers,
radial and nonradial cases}

\author{David Ruiz}

\address{Dpto. An\'alisis Matem\'atico, Universidad de Granada, 18071 Granada, Spain}

\thanks{The author has been supported
by the Spanish Ministry of Science and Technology under Grant
MTM2005-01331 and by J. Andaluc\'{\i}a (FQM 116).}

\email{daruiz@ugr.es}

\keywords{Schr{\"o}dinger-Poisson-Slater equation, Coulomb energy.}

\subjclass[2000]{}

\maketitle

\begin{abstract}
This paper is motivated by the study of a version of the so-called
Schr{\"o}dinger-Poisson-Slater problem:
$$ - \Delta u + \omega u + \lambda \left ( u^2 \star \frac{1}{|x|} \right
) u=|u|^{p-2}u,$$ where $u \in H^1(\R^3)$. We are concerned mostly
with $p \in (2,3)$. The behavior of radial minimizers motivates
the study of the static case $\omega=0$. Among other things, we
obtain a general lower bound for the Coulomb energy, that could be
useful in other frameworks. The radial and nonradial cases turn
out to yield essentially different situations.

\end{abstract}

\section{Introduction}

Our starting point is the system of Hartree-Fock equations:
\begin{equation} \label{hf} - \Delta \psi_k + (V(x)-E_k) \psi_k + \psi_k(x) \int_{\R^3}
\frac{|\rho(y)|^2}{|x-y|}\, dy - \sum_{j=1}^N \psi_j(x)
\int_{\R^3} \frac{\overline{\psi_j(y)}\psi_k(y)}{|x-y|}\,
dy=0,\end{equation} where $\psi_k: \R^3 \to \C$ form an orthogonal
set in $H^1$, $\rho= \frac{1}{N} \sum_{j=1}^N |\psi_j|^2$, $V(x)$
is an exterior potential and $E_k \in \R$. This system appeared in
Quantum Mechanics in the study of a system of $N$ particles. With
respect to the Hartree equations, it has the advantage of being
consistent with the Pauli exclusion principle.

In \eqref{hf}, the last term is usually called the exchange term,
and is the most difficult term to be treated. A very simple
approximation of this term was given by Slater \cite{slater} in
the form:
$$ \sum_{j=1}^N \psi_j \int_{\R^3}
\frac{\overline{\psi_j(y)}\psi_k(y)}{|x-y|}\, dy \sim C_s
\rho^{1/3}\psi_k,
$$
where $C_s$ is a positive constant.

By a mean field approximation, the local density $\rho$ can be
estimated as $\rho = |u|^2$, where $u$ is a solution of the
problem:
$$-\Delta u(x) + V(x) u(x) + B u(x) \int_{\R^3}
\frac{|u(y)|^2}{|x-y|}\, dy =C |u(x)|^{2/3}u(x).
$$
This system receives the name of Schr{\"o}dinger-Poisson-Slater
system (see \cite{bfortunato, boka, bls, mauser}).

\medskip In this paper we are interested in the following version
of the Schr{\"o}dinger-Poisson-Slater problem:

\begin{equation}\label{eq11} - \Delta u + u + \lambda \left ( u^2 \star \frac{1}{|x|} \right )
u=|u|^{p-2}u, \end{equation} where $\lambda>0$. We are concerned
with the case $p \in (2,3)$, and we mainly consider positive
solutions. The case $p\geq 3$ is different and has been studied in
\cite{a-ruiz, kikuchi, JFA}.

In recent years problem \eqref{eq11} has been object of intensive
research, see \cite{a-ruiz, mugnai, mugnai2, daprile2, daprile3,
ianni, kikuchi, kikutesis, pisani, M3AS, JFA, oscar, zhou}. We
point out that \eqref{eq11} presents a combination of repulsive
forces (given by the nonlocal term) and attractive forces (given
by the power term). As we shall see, the interaction between them
gives rise to non expected situations.

The associated energy functional is $I_{\la}: H^1(\R^3) \to \R$,
$$ I_{\la}(u)= \frac 1 2 \int_{\R^3} \left ( |\nabla u|^2 + u^2 \right ) dx +
\frac{\la}{4} \intr \intr \frac{u^2(x) u^2(y)}{|x-y|}\, dx \, dy -
\frac{1}{p} \intr |u|^p\,dx. $$ The original motivation of this
paper is the following. In \cite{JFA} it is shown that
$I_{\la}|_{H_r^1}$ is bounded below for any positive value of
$\lambda$, where $H^1_r$ denotes the Sobolev space of radial
functions. Moreover, when $\lambda$ is small, there exist
nontrivial radial minimizers that blow up as $\lambda \to 0$.

One could ask how is the profile of those solutions as $\lambda
\to 0$. A partial answer is given in \cite{daprile2, M3AS}. In
those papers, by using a perturbation technique, solutions of
\eqref{eq11} with a certain behavior are found (for $\la$ small).
Moreover, those solutions correspond to local minima of
$I_{\la}|_{H^1_r}$ and their energy tend to $-\infty$ as $\lambda
\to 0$, so it is quite reasonable to think that those solutions
correspond to global minima. However, those solutions are provided
only if $p<18/7$. This exponent appears also in more recent work
on concentration on spheres, see \cite{ianni}.

At this point, some natural questions arise: what is the meaning
of the value $p=18/7$? How do minimizers behave if $p \in
(18/7,3)$? Observe that the most important case in applications,
$p=8/3$, belongs to this interval. In this paper we find answers
to both questions.

By making the change of variables $v(x)= \e^{\frac{2}{p-2}}u(\e
x)$, $\e = \lambda^{\frac{p-2}{4(3-p)}}$, we arrive to the
problem:
\begin{equation}\label{eq12} - \Delta v + \e^2 v + \left ( v^2 \star \frac{1}{|x|} \right )
v=|v|^{p-2}v. \end{equation} This motivates the study of the limit
problem:
\begin{equation}\label{eqlimit} - \Delta v + \left ( v^2 \star \frac{1}{|x|} \right )
v=|v|^{p-2}v. \end{equation} Problem \eqref{eqlimit} can be
thought of as a zero mass problem (see \cite{blions}), but under
the action of a nonlocal term. To start with, $H^1(\R^3)$ is not
the right space to study it. It seems quite clear that the right
space should be:
$$ E=E(\R^3)= \{ u \in D^{1,2}(\R^3): \int_{\R^3} \int_{\R^3} \frac{u^2(x) u^2(y)}{|x-y|}\, dx \, dy < +\infty\}.$$
The double integral expression is the so-called Coulomb energy of
the wave, and has been very studied, see for instance \cite{lieb}.
In other words, $E(\R^3)$ is the space of functions in
$D^{1,2}(\R^3)$ such that the Coulomb energy of the charge is
finite. We also denote $E_r=E(\R^3)$ the subspace of radial
functions.

One of the main goals of this paper is the following general
inequality:

\begin{theorem} \label{teolb}
Given $\a >1/2$, there exists $c=c({\a})>0$ such that for any $u:
\R^N \to \R$ measurable function, we have:
\begin{equation} \label{lowerbound}\int_{\R^N} \int_{\R^N}
\frac{u^2(x) u^2(y)}{|x-y|^{N-2}}\, dx \, dy \geq c \left (
\int_{\R^N} \frac{u(x)^2}{|x|^{\frac{N-2}{2}} (1+ \left |\log |x|
\right |)^{\a}}\, dx \right )^2. \end{equation} In particular, $E
\subset L^2(\R^3,\ |x|^{-\frac{1}{2}} (1+ \left |\log |x| \right
|)^{-\a}\, dx)$ continuously.

\end{theorem}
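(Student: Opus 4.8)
The plan is to reduce the double integral to a one-dimensional problem in the radial variable by spherical rearrangement, and then to estimate a weighted integral of a nonnegative function by its Riesz-type energy. Write $f = u^2 \geq 0$. The left-hand side of \eqref{lowerbound} is the Riesz energy $\iint f(x) f(y) |x-y|^{-(N-2)}\,dx\,dy$. Since this energy does not increase under symmetric decreasing rearrangement (Riesz rearrangement inequality), while the right-hand side only increases under rearrangement of $f$ against the radially decreasing weight (Hardy--Littlewood inequality), it suffices to prove the estimate for $u$ (hence $f$) radial and nonincreasing. So from now on assume $f = f(r)$ is radially symmetric and nonincreasing in $r = |x|$.

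The next step is the key kernel estimate: for radial $f \geq 0$, there is a pointwise lower bound of the potential $\int_{\R^N} f(y)|x-y|^{-(N-2)}\,dy$ in terms of a one-dimensional average of $f$. Concretely, using the Newton-type formula for the Riesz potential of a radial density, one has for $|x| = r$ that $\int f(y)|x-y|^{-(N-2)}\,dy \geq c_N \int_0^\infty \min(r,s)^{-(N-2)}\, \max(r,s)^{0}\,\dots$ — more precisely, for the kernel $|x-y|^{2-N}$ one gets $\int f(y)|x-y|^{2-N}\,dy \geq c_N\, r^{2-N}\int_0^r f(s)\, s^{N-1}\,ds$ (keeping only the contribution of $|y| < |x|$, where $|x-y| \leq 2|x|$ is false; rather use $|x-y| \le |x|+|y| \le 2|x|$). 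Integrating this against $f(x)$ over $\R^N$ and writing everything in polar coordinates leads, after the substitution $t = \log r$, to a bound of the form
\begin{equation*}
\iint \frac{f(x)f(y)}{|x-y|^{N-2}}\,dx\,dy \;\geq\; c \int_{-\infty}^{\infty} g(t)\left(\int_{-\infty}^{t} g(\tau)\,d\tau\right) dt \;=\; \frac{c}{2}\left(\int_{-\infty}^\infty g(t)\,dt\right)^2,
\end{equation*}
where $g(t) = |S^{N-1}|\, f(e^t)\, e^{Nt/2} \cdot e^{-(N-2)t/2}\cdot(\dots)$ is chosen so that $\int g\,dt$ reproduces the weighted integral $\int_{\R^N} f(x)|x|^{-(N-2)/2}\,dx$ — note this is exactly the $\alpha = 0$ version of the right-hand side, corresponding to the weight without the logarithmic factor. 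This shows that in fact the cleanest bound is with weight $|x|^{-(N-2)/2}$ and no logarithm; the logarithmic factor with exponent $\alpha > 1/2$ is then needed to upgrade from "for radial monotone $f$" or to handle the tails, as I explain next.

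The reason the logarithm (and the restriction $\alpha > 1/2$) enters: the naive bound above, if it held as stated, would give \eqref{lowerbound} with $\alpha = 0$, which is too strong — a radial function like $u(x)^2 \sim |x|^{-(N-2)/2}/(|x|^N |\log|x||^{\beta})$ can have finite Coulomb energy while making $\int u^2 |x|^{-(N-2)/2}\,dx$ diverge logarithmically. So the true kernel estimate must lose a logarithm. The correct version splits the potential estimate dyadically in $\log|x|$: one shows $\int f(y)|x-y|^{2-N}\,dy \gtrsim |x|^{2-N}\int_{|x|/e \le |y| \le e|x|} f(y)\,|y|^{N-1}\,\frac{d|y|}{|y|}$ or similar, so that the resulting one-dimensional inequality is of Hardy type on a window of width $O(1)$ in $t = \log r$, rather than a full convolution. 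Then I apply Cauchy--Schwarz with the weight $(1+|t|)^{-\alpha}$: writing $\int_{\R^N} f|x|^{-(N-2)/2}(1+|\log|x||)^{-\alpha}\,dx = \int h(t)(1+|t|)^{-\alpha}\,dt$ with $h \geq 0$, Cauchy--Schwarz gives $\big(\int h(t)(1+|t|)^{-\alpha}\,dt\big)^2 \leq \big(\int (1+|t|)^{-2\alpha}\,dt\big)\big(\int h(t)^2\,dt\big)$, and the first factor is finite precisely when $2\alpha > 1$, i.e. $\alpha > 1/2$; the second factor $\int h(t)^2\,dt$ is controlled by the Coulomb energy via the windowed Hardy estimate (and monotonicity of $f$ to compare $h(t)^2$ with $\int h\cdot\int_{|\tau-t|\le 1} h$).

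The main obstacle I anticipate is getting the windowed kernel lower bound with the right dependence so that $\int h^2\,dt$ — not merely $(\int h\,dt)^2$ — is dominated by the Coulomb energy; equivalently, controlling the "diagonal" part $\iint_{|x|\sim|y|} f(x)f(y)|x-y|^{2-N}$ from below by $\int_{|x|\sim R} (f(x)|x|^{(N-2)/2})^2 \, d(\log|x|)$ uniformly in $R$. This is where radial monotonicity of $f$ is used essentially (to say $f$ is roughly constant on the window, or to bound it below by its average), and it is also where the exponent $N-2$ of the kernel, being the scaling-critical one, makes the estimate scale-invariant in $R$ and hence lets the windows be combined. The final sentence of the theorem, $E \hookrightarrow L^2(\R^3, |x|^{-1/2}(1+|\log|x||)^{-\alpha}\,dx)$, is then immediate from \eqref{lowerbound} with $N = 3$ applied to $u$, since the left-hand side is finite by definition of $E$.
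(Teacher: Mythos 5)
Your reduction to radial nonincreasing $f$ is based on a reversed form of the Riesz rearrangement inequality, and this breaks the argument at the very first step. Riesz's inequality states that the Coulomb energy does \emph{not decrease} under symmetric decreasing rearrangement: $\iint f(x)f(y)|x-y|^{2-N}\,dx\,dy \le \iint f^*(x)f^*(y)|x-y|^{2-N}\,dx\,dy$. So passing to $f^*$ makes the left-hand side of \eqref{lowerbound} \emph{larger} (and, by Hardy--Littlewood, the right-hand side larger too), and proving the inequality for $f^*$ tells you nothing about $f$; there is no constant $c$ with $\mathrm{LHS}(f)\ge c\,\mathrm{LHS}(f^*)$ either, as one sees from functions supported on many far-apart bumps (exactly the construction the paper uses in Section 5). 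Since the rest of your argument uses radial monotonicity ``essentially'' (your words), the gap propagates. In addition, you explicitly leave open the key windowed Hardy estimate that would dominate $\int h(t)^2\,dt$ by the near-diagonal Coulomb energy; that step is never carried out, so even for radial decreasing $f$ the proof is incomplete.

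The repair -- and this is what the paper does -- is to avoid rearrangement entirely and to ask for less than $\int h^2$. Restrict the double integral to the cone $\{|y|<2|x|<4|y|\}$, where $|x-y|\le|x|+|y|\le 3\min(|x|,|y|)$ gives $|x-y|^{-q}\ge c\,(|x|\,|y|)^{-q/2}$ for an arbitrary measurable $f\ge 0$; the angular variables then integrate out into spherical averages $h(r)$, with no monotonicity required. Next, instead of $\int h^2\,dt$, one only needs $\sum_n (1+|n|)^{2\alpha}a_n^2$ with $a_n=\int_{2^n}^{2^{n+1}}h$, via the discrete inequality $\left(\sum_n a_n\right)^2\le\left(\sum_n b_n^{-1}\right)\left(\sum_n b_n a_n^2\right)$ with $b_n=(1+|n|)^{2\alpha}$, summable precisely for $\alpha>1/2$ -- so your Cauchy--Schwarz heuristic does locate the threshold correctly. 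The point you are missing is that each $a_n^2=\int_{I_n}\int_{I_n}h(r)h(s)\,dr\,ds$ is itself a double integral over a dyadic block and therefore sits automatically inside the near-diagonal region $\{s/2<r<2s\}$; no comparison of $h$ with its average on the window, and hence no monotonicity, is needed. Your dyadic decomposition and weighted Cauchy--Schwarz instincts are the right ones, but the rearrangement step is wrong in direction and the monotonicity it was meant to supply is in fact unnecessary.
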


We are not aware of any lower bound for the Coulomb energy in this
fashion. We think that this inequality can be very useful in other
frameworks, such as the Hartree equation or the Thomas-Fermi-Von
Weizs{\"a}cker model (see \cite{catto1, catto2}). We also show that
Theorem \ref{teolb} is ``almost sharp": in the right term, the
exponent $\frac{N-2}{2}$ is optimal and a logarithmic factor is
needed, see Remark \ref{remark}.

By combining inequality \eqref{lowerbound} with the results of
\cite{swwillem, swwillem2}, we obtain the following result, that
shows the significance of the exponent $18/7$ in the radial case.

\begin{theorem} \label{teo2} $E_r(\R^3)\subset L^p(\R^3)$
continuously for $p \in (\frac{18}{7}, 6]$, and the inclusion is
compact for $p \in (\frac{18}{7}, 6)$. Moreover, $E_r(\R^3)$ is
not included in $ L^p(\R^3)$ for $p<\frac{18}{7}$ or $p>6$.
\end{theorem}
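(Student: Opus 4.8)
The plan is to split Theorem~\ref{teo2} into four assertions and treat them separately: the continuous inclusion $E_r\hookrightarrow L^p(\R^3)$ for $p\in(\tfrac{18}{7},6]$, its compactness for $p\in(\tfrac{18}{7},6)$, the failure of the inclusion for $p>6$, and its failure for $p<\tfrac{18}{7}$. The endpoint $p=6$ is purely Sobolev, the region near the origin never needs the Coulomb energy, and all the arithmetic that produces the number $\tfrac{18}{7}$ lives at infinity.

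I would begin with the cheap parts. Since $E_r(\R^3)\subset D^{1,2}(\R^3)$, the Sobolev inequality gives $E_r\hookrightarrow L^6(\R^3)$ continuously (and, being critical, not compactly). For $p<6$ the radial lemma $|u(x)|\le C|x|^{-1/2}\|\nabla u\|_2$, valid for every $u\in D^{1,2}_r(\R^3)$ and every $x$, gives $\int_{|x|\le1}|u|^p\,dx\le C\|\nabla u\|_2^{\,p}\int_0^1 r^{2-p/2}\,dr<\infty$, with a constant depending continuously on $\|\nabla u\|_2$. So for $p\in(\tfrac{18}{7},6)$ only the exterior $\{|x|>1\}$ is at issue.

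This is where Theorem~\ref{teolb} enters. Fixing $\a\in(\tfrac12,1)$, it yields, for $u\in E_r$,
$$\int_0^\infty u(r)^2\,r^{3/2}(1+|\log r|)^{-\a}\,dr\ \lesssim\ D(u)^{1/2},\qquad D(u):=\intr\intr\frac{u^2(x)u^2(y)}{|x-y|}\,dx\,dy,$$
while $\int_0^\infty u'(r)^2 r^2\,dr\lesssim\|\nabla u\|_2^2$ trivially. Hence $E_r$ embeds continuously into the space of radial functions whose derivative lies in $L^2(r^2\,dr)$ and which themselves lie in $L^2(r^{3/2}(1+|\log r|)^{-\a}\,dr)$ on $(1,\infty)$, and it is for spaces of this shape that \cite{swwillem,swwillem2} supply the relevant embedding theory. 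Concretely, I would split $(1,\infty)$ into dyadic intervals $I_k=[2^k,2^{k+1}]$ and on each use the one–dimensional Gagliardo--Nirenberg estimate $\|u\|_{L^\infty(I_k)}^2\lesssim|I_k|^{-1}\|u\|_{L^2(I_k)}^2+\|u\|_{L^2(I_k)}\|u'\|_{L^2(I_k)}$ together with $\|u\|_{L^p(I_k)}^p\le\|u\|_{L^\infty(I_k)}^{p-2}\|u\|_{L^2(I_k)}^2$; feeding in the annulus bounds $\|u'\|_{L^2(I_k)}^2\lesssim2^{-2k}a_k$ and $\|u\|_{L^2(I_k)}^2\lesssim2^{-3k/2}k^{\a}b_k$, where $\sum_k a_k\lesssim\|\nabla u\|_2^2$ and $\sum_k b_k\lesssim D(u)^{1/2}$, and summing $\int_{I_k}|u|^p r^2\,dr$ over $k$, one is left with two series whose terms decay like $2^{k(12-5p)/4}$ and $2^{k(18-7p)/8}$ (times harmless powers of $k$ and the bounded quantities $a_k,b_k$). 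Both converge exactly when $p>\tfrac{18}{7}$ — note $\tfrac{18}{7}>\tfrac{12}{5}$ — and the resulting estimate is a finite sum of monomials of total degree $p$ in $\|\nabla u\|_2$ and $D(u)^{1/4}$, which is the continuous embedding. This is the step that both carries the real content and pins down the exponent. For compactness on $(\tfrac{18}{7},6)$ I would then run the usual three–region argument on a bounded sequence $(u_n)$, weakly convergent in $D^{1,2}_r$: the bound near the origin is uniform in $n$ because $p<6$; the dyadic estimate shows $\int_{|x|\ge R}|u_n|^p\to0$ uniformly as $R\to\infty$, being the tail of a convergent series; and on each fixed annulus Rellich gives strong $L^p$ convergence.

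Finally the two sharpness claims, by explicit examples. For $p>6$ I would take $u(x)=|x|^{-s}\chi(x)$ with $s\in(\tfrac3p,\tfrac12)$ and $\chi$ a smooth cutoff supported in $B_1$ equal to $1$ near $0$: then $u\in D^{1,2}(\R^3)$ because $2s<1$, and $u^2$ is bounded with compact support so $u^2\in L^{6/5}$ and $D(u)<\infty$ by Hardy--Littlewood--Sobolev, giving $u\in E_r$; but $sp>3$ makes $\int|u|^p$ diverge at the origin. For $p<\tfrac{18}{7}$ I would use radial bumps concentrating on thin shells at infinity, $u_R(x)=R^{-7/8}\psi\big((|x|-R)/R^{1/4}\big)$ with $\psi$ a fixed profile and $R\to\infty$: a direct computation gives $\|\nabla u_R\|_2^2\sim1$, and estimating the Coulomb self-energy of the shell by hand (Hardy--Littlewood--Sobolev is far too lossy here) gives $D(u_R)\sim1$ as well, so $(u_R)$ is bounded in $E_r$, whereas $\int|u_R|^p\,dx\sim R^{-7p/8}\cdot R^2\cdot R^{1/4}=R^{(18-7p)/8}\to\infty$; superposing such bumps at rapidly separated scales (or invoking the closed graph theorem) then produces an actual element of $E_r\setminus L^p$. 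I expect the genuine difficulty of the whole proof to be arithmetic and to occur in two mirror–image places: on the positive side, arranging the interpolation between the gradient control and the Theorem~\ref{teolb} weight so that the exponents close precisely at $\tfrac{18}{7}$ and not at the crude scaling value $\tfrac{12}{5}$; and on the negative side, tuning the height $R^{-7/8}$ against the thickness $R^{1/4}$ so as to saturate the gradient bound while keeping the \emph{nonlocal} Coulomb energy of the concentrating shell bounded. The value $\tfrac{18}{7}$ is exactly the break–even point of this competition.
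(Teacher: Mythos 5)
Your proposal is correct, and its overall architecture coincides with the paper's: the positive direction rests on Theorem \ref{teolb} combined with a weighted radial embedding, and the failure for $p<\tfrac{18}{7}$ is shown by thin shells escaping to infinity. The one genuine difference is in the key embedding step. The paper converts the conclusion of Theorem \ref{teolb} into membership in $H^1_r(\R^3,V)$ with $V(x)=(1+|x|)^{-\gamma}$, $\gamma>\tfrac12$ (discarding the logarithm in exchange for a power of $|x|$ at infinity), and then cites the embeddings of \cite{swwillem,swwillem2}, for which the lower exponent $\frac{2(4+\gamma)}{4-\gamma}$ tends to $\tfrac{18}{7}$ as $\gamma \downarrow \tfrac12$. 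You instead reprove the needed special case by hand, via dyadic annuli and the Agmon-type interpolation inequality, keeping the logarithmic weight; your exponents $\tfrac{12-5p}{4}$ and $\tfrac{18-7p}{8}$ check out, and since $\tfrac{18}{7}>\tfrac{12}{5}$ the second term is indeed the binding one. This buys a self-contained argument that makes visible exactly where $\tfrac{18}{7}$ comes from, at the cost of redoing work the paper outsources. Your shell counterexample for $p<\tfrac{18}{7}$ is literally the paper's up to reparametrization: with $\e=R^{-7/8}$ the paper's choices are $R=\e^{-8/7}$ and thickness $S=\e^{-2/7}=R^{1/4}$, and the paper likewise sums disjointly supported, geometrically small copies to produce a single function in $E_r\setminus L^p$ (the triangle inequality for $\|\cdot\|_E$ from Proposition \ref{app} takes care of the cross Coulomb terms, which your closed-graph alternative would avoid entirely). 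For $p>6$ the paper invokes dilations while you use a local singularity $|x|^{-s}\chi$; both work, though note one small slip: $u^2=|x|^{-2s}\chi^2$ is \emph{not} bounded near the origin — what you actually need, and do have since $\tfrac{12s}{5}<3$, is $u^2\in L^{6/5}$ so that Hardy--Littlewood--Sobolev gives finite Coulomb energy.
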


With this result in hand, we obtain that for $p \in
(\frac{18}{7},6]$, the functional $J: E_r \to \R$,
$$ J(v)= \frac 1 2 \int_{\R^3} |\nabla v|^2  \, dx +
\frac{1}{4} \intr \intr \frac{v^2(x) v^2(y)}{|x-y|}\, dx \, dy -
\frac{1}{p} \intr |v|^p\,dx, $$ is well-defined, $C^1$, and its
critical points correspond to solutions of \eqref{eqlimit}.
Moreover:

\begin{theorem} \label{teo3}For any $p \in (18/7, 3)$, $J$ is coercive and weak lower
semicontinuous. Therefore, it attains its infimum, which is
negative. As a consequence, \eqref{eqlimit} has a positive
solution in $E$.
\end{theorem}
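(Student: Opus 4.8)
The plan is to prove the statement by the direct method of the calculus of variations, using Theorem~\ref{teo2} as the essential input. Throughout write $D(v)=\intr\intr \frac{v^2(x)v^2(y)}{|x-y|}\,dx\,dy$, so that $J(v)=\frac12\|\n v\|_2^2+\frac14 D(v)-\frac1p\|v\|_p^p$ on $E_r$, and recall that $v\neq 0$ in $D^{1,2}(\R^3)$ forces $\|\n v\|_2>0$.

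For coercivity and lower boundedness, the key idea I would use is to exploit the scaling $v\mapsto v_\mu:=\mu^2 v(\mu\,\cdot)$, which maps $E_r$ into itself and rescales the three terms according to $\|\n v_\mu\|_2^2=\mu^3\|\n v\|_2^2$, $D(v_\mu)=\mu^3 D(v)$ and $\|v_\mu\|_p^p=\mu^{2p-3}\|v\|_p^p$. Given $v\in E_r\setminus\{0\}$, set $T:=\|\n v\|_2^2+D(v)>0$ and normalize with $\mu=T^{-1/3}$, so that $w:=v_\mu$ satisfies $\|\n w\|_2^2+D(w)=1$; by the continuous embedding $E_r\hookrightarrow L^p$ of Theorem~\ref{teo2} there is $M=M(p)$ with $\|w\|_p\le M$ for every such $w$. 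Scaling back yields $\|v\|_p^p\le M^p\,T^{(2p-3)/3}$, and since $p<3$ the exponent $(2p-3)/3$ is $<1$; hence $J(v)\ge \tfrac14 T-\tfrac{M^p}{p}T^{(2p-3)/3}$, which is bounded below on $[0,\infty)$ and tends to $+\infty$ as $T\to\infty$. This is exactly where the hypothesis $p<3$ is used, and I expect it to be the only genuinely delicate point: converting this scaling heuristic into the clean inequality via Theorem~\ref{teo2}. Negativity of the infimum I would get from the same scaling applied to a fixed radial $w_0\in C_c^\infty(\R^3)\setminus\{0\}\subset E_r$: since $1<2p-3<3$, as $\mu\to 0^+$ the term $-\tfrac{\mu^{2p-3}}{p}\|w_0\|_p^p$ dominates the $O(\mu^3)$ positive terms, so $J\big((w_0)_\mu\big)<0$ for $\mu$ small.

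For weak lower semicontinuity I would take a minimizing sequence $(v_n)\subset E_r$; by the coercivity above it is bounded in $D^{1,2}(\R^3)$ with $\sup_n D(v_n)<\infty$, so up to a subsequence $v_n\rightharpoonup v$ in $D^{1,2}$, with $v_n\to v$ a.e.\ by local compactness and $v$ radial. Then $\|\n v\|_2^2\le\liminf\|\n v_n\|_2^2$ by weak lower semicontinuity of the Dirichlet integral, and $D(v)\le\liminf D(v_n)<\infty$ by Fatou's lemma applied to $v_n^2(x)v_n^2(y)/|x-y|$, so $v\in E_r$; meanwhile the compactness of $E_r\hookrightarrow L^p$ for $p\in(18/7,6)$ (Theorem~\ref{teo2}, applicable since $p<3$) gives, along a further subsequence, $v_n\to v$ in $L^p$, hence $\|v_n\|_p^p\to\|v\|_p^p$. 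Adding these three facts yields $J(v)\le\liminf J(v_n)=\inf_{E_r}J$, so $v$ is a minimizer, and $J(v)=\inf J<0=J(0)$ forces $v\not\equiv 0$.

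Finally, being a global minimizer of the $C^1$ functional $J$ on $E_r$, $v$ is a critical point of $J$, hence a solution of \eqref{eqlimit} by the correspondence recalled just before the theorem. Replacing $v$ by $|v|$ — which changes neither $\|\n v\|_2$ nor $D(v)$ nor $\|v\|_p$, hence leaves $J$ unchanged — we may assume $v\ge 0$, and then the strong maximum principle applied to $-\Delta v=|v|^{p-2}v-(v^2\star|x|^{-1})v$, whose potential $v^2\star|x|^{-1}$ is locally bounded, gives $v>0$. In short, once Theorem~\ref{teo2} is in hand everything reduces to the standard direct method, the only real work being the scaling estimate for coercivity.
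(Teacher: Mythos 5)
Your proof is correct and follows essentially the same route as the paper: coercivity via the rescaling $\mu=T^{-1/3}$ combined with the continuous embedding of Theorem~\ref{teo2} (this is exactly the paper's $M(u)$ argument), negativity of the infimum by the same scaling, and attainment via compactness of $E_r\subset L^p$. You additionally fill in details the paper leaves implicit (Fatou for the weak lower semicontinuity of the Coulomb term, and the $|v|$ plus maximum-principle step for positivity), but the underlying argument is identical.
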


We do not think that the above solution belongs to $L^2(\R^3)$, so
it does not correspond to a real physical situation. However, it
can be used to describe the asymptotic behavior of the minimizers
of $I_{\la}$:

\begin{theorem} \label{teo4} Suppose that $p \in (18/7,3)$ and let $u_{\lambda}$ be a
minimizer of $I_{\la}|_{H^1_r}$. Then, as $\lambda \to 0$,
$$ u_{\lambda} = \e^{-\frac{2}{p-2}} v_{\e}\left (\frac{x}{\e} \right ), $$
where $\e = \lambda^{\frac{p-2}{4(3-p)}}$ and $d(v_{\e},K) \to 0$.
Here $K\subset E$ is the set of minimizers:
$$ K= \{ v \in E:\ J(v)= \min J\},$$
 and $d(v,K)= \inf \{\|v- w\|_E:\ w \in K \}$. In particular,
 given $\la_n \to 0$, we have that $\e_n \to 0$ and $v_{\e_n} \to v$ in $E$ (up to a subsequence)
 where $v$ is a minimizer of $J$.
 \end{theorem}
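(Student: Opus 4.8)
The plan is to reduce, via the scaling already used to pass from \eqref{eq11} to \eqref{eq12}, the minimization of $I_\la|_{H^1_r}$ to that of the perturbed functional $J_\e(v):=J(v)+\tfrac{\e^2}{2}\intr v^2$ on $H^1_r$, and then to let $\e\to0$ by the direct method, relying on Theorems \ref{teo2} and \ref{teo3}. If $v(x)=\e^{2/(p-2)}u(\e x)$ with $\e=\la^{(p-2)/(4(3-p))}$, a direct computation gives $I_\la(u)=\e^{(p-6)/(p-2)}J_\e(v)$; here the point is the cancellation $\la\,\e^{(5p-18)/(p-2)}=\e^{(p-6)/(p-2)}$, which makes the Coulomb term carry the same prefactor as the kinetic and potential terms, while the extra mass term picks up the factor $\e^{(3p-10)/(p-2)}=\e^{(p-6)/(p-2)}\e^2$. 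Since the prefactor is positive, since $u\mapsto v$ is a bijection of $H^1_r$ onto itself, and since $H^1(\R^3)\subset E$ (because $u^2\in L^{6/5}$ for $u\in H^1$, so the Coulomb energy is finite by Hardy--Littlewood--Sobolev), so that $J_\e$ is well defined and $C^1$ on $H^1_r$ for $p\in(18/7,3)$ by Theorem \ref{teo2}, we have: $u_\la$ minimizes $I_\la|_{H^1_r}$ iff $v_\e$ minimizes $J_\e|_{H^1_r}$. Write $m:=\min_{E_r}J$ (attained on $K$, Theorem \ref{teo3}) and $m_\e:=\inf_{H^1_r}J_\e=J_\e(v_\e)$.

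Next I would show $m_\e\to m$. The lower bound is immediate: $m_\e=J_\e(v_\e)\ge J(v_\e)\ge m$, since $J_\e\ge J$ and $H^1_r\subset E_r$. For the upper bound, fix $v_0\in K$ and cut it off: with $\zeta_R\in C^\infty_c(\R^3)$ radial, $\zeta_R\equiv1$ on $B_R$, $\mathrm{supp}\,\zeta_R\subset B_{2R}$, $|\n\zeta_R|\le 2/R$, one has $\zeta_Rv_0\in H^1_r$, and by dominated convergence in the three terms of $J$ (the Coulomb double integral being dominated by $v_0^2(x)v_0^2(y)/|x-y|$, integrable since $v_0\in E_r$), $J(\zeta_Rv_0)\to J(v_0)=m$ as $R\to\infty$. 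Hence for $\eta>0$ and $R$ large, $m_\e\le J_\e(\zeta_Rv_0)=J(\zeta_Rv_0)+\tfrac{\e^2}{2}\|\zeta_Rv_0\|_2^2\to J(\zeta_Rv_0)\le m+\eta$, so $\limsup_{\e\to0}m_\e\le m$. Thus $m_\e\to m$, and consequently $J(v_\e)\to m$ (squeezed between $m$ and $m_\e$) and $\e^2\|v_\e\|_2^2=m_\e-J(v_\e)\to0$.

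Now coercivity of $J$ (Theorem \ref{teo3}) together with $J(v_\e)\le m_\e\le C$ shows $\{v_\e\}$ is bounded in $E_r$. Fixing $\la_n\to0$ (so $\e_n\to0$) and passing to a subsequence, $v_{\e_n}\rightharpoonup v$ in $E_r$, hence in $D^{1,2}_r$, with $v_{\e_n}\to v$ in $L^q(\R^3)$ for every $q\in(18/7,6)$ (compact embedding, Theorem \ref{teo2}) and a.e.; in particular $v\in E_r$. By weak lower semicontinuity of $J$, $J(v)\le\liminf J(v_{\e_n})=m$, so $J(v)=m$, i.e. $v\in K$ (and $v\ne0$ since $m<0$). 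To upgrade to strong convergence in $E$, note that from $J(v_{\e_n})\to J(v)$ and $\|v_{\e_n}\|_p^p\to\|v\|_p^p$ the quantity $\tfrac12\|\n v_{\e_n}\|_2^2+\tfrac14D(v_{\e_n}^2,v_{\e_n}^2)$ converges to $\tfrac12\|\n v\|_2^2+\tfrac14D(v^2,v^2)$, where $D(f,g)=\intr\intr\frac{f(x)g(y)}{|x-y|}\,dx\,dy$; both summands are weakly lower semicontinuous (the Coulomb one by Fatou), so each converges separately, and $\|\n v_{\e_n}\|_2\to\|\n v\|_2$ together with weak $D^{1,2}$-convergence gives $v_{\e_n}\to v$ strongly in $D^{1,2}$, hence in $L^6$.

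The main obstacle is the Coulomb part: the radial compactness of Theorem \ref{teo2} yields strong $L^q$ convergence only for $q\in(18/7,6)$, which does not reach the exponent $12/5$ one would need in order to get $v_{\e_n}^2\to v^2$ in $L^{6/5}\hookrightarrow\dot H^{-1}(\R^3)$ directly. I would circumvent this by using that $D$ is, up to a positive constant, the $\dot H^{-1}(\R^3)$ inner product, hence positive definite and monotone on nonnegative functions ($0\le g\le h$, $f\ge0\Rightarrow D(f,g)\le D(f,h)$). Since $v_{\e_n}^2$ is bounded in $\dot H^{-1}$ and $v_{\e_n}^2\to v^2$ in $L^{3/2}$ (from $v_{\e_n}\to v$ in $L^3$), we get $v_{\e_n}^2\rightharpoonup v^2$ in $\dot H^{-1}$; combined with the already established $D(v_{\e_n}^2,v_{\e_n}^2)\to D(v^2,v^2)$, weak plus norm convergence gives $v_{\e_n}^2\to v^2$ strongly in $\dot H^{-1}$. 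Then $A_n:=(v_{\e_n}-v)^2\ge0$ satisfies $0\le A_n\le 2v_{\e_n}^2+2v^2$, so $A_n$ is bounded in $\dot H^{-1}$ (monotonicity) and $A_n\to0$ in $L^{3/2}$, whence $A_n\rightharpoonup0$ in $\dot H^{-1}$; finally, by monotonicity and Cauchy--Schwarz for $D$, $D(A_n,A_n)\le 2D(A_n,v_{\e_n}^2)+2D(A_n,v^2)$ with $D(A_n,v_{\e_n}^2)=D(A_n,v_{\e_n}^2-v^2)+D(A_n,v^2)$, where $D(A_n,v^2)\to0$ and $|D(A_n,v_{\e_n}^2-v^2)|\le D(A_n,A_n)^{1/2}D(v_{\e_n}^2-v^2,v_{\e_n}^2-v^2)^{1/2}\to0$. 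Hence $D(A_n,A_n)\to0$, i.e. the Coulomb part of $v_{\e_n}-v$ vanishes; together with the $D^{1,2}$-convergence this gives $v_{\e_n}\to v$ in $E$ with $v\in K$, which is the last assertion. Then $d(v_\e,K)\to0$ as $\la\to0$ follows by contradiction: a sequence with $d(v_{\e_n},K)\ge\d>0$ would admit a sub-subsequence converging in $E$ to a point of $K$.
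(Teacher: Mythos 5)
Your proof is correct and follows the same overall route as the paper: the same rescaling $v(x)=\e^{2/(p-2)}u(\e x)$ reducing $I_\la$ to $J_\e=J+\tfrac{\e^2}{2}\|\cdot\|_{L^2}^2$ up to the positive factor $\e^{(6-p)/(p-2)}$, the same squeeze $\min J\le J(v_\e)\le \min J_\e\to\min J$ (the paper gets the upper bound from density of $C^\infty_{0,r}$ in $E_r$, which is proved by exactly your cutoff), and then coercivity, weak lower semicontinuity and the compact embedding of Theorem \ref{teo2} to identify the weak limit as a minimizer. The one place you genuinely diverge is the upgrade to strong $E$-convergence: the paper observes that $J(v_{\e_n})\to J(v)$ together with $\intr|v_{\e_n}|^p\to\intr|v|^p$ forces $\|v_{\e_n}\|_E\to\|v\|_E$ and then simply invokes uniform convexity of $E$ (Proposition \ref{app} plus the Radon--Riesz property), whereas you reprove this by hand, splitting the two weakly lower semicontinuous pieces and running a weak-plus-norm argument for $v_{\e_n}^2$ in $\dot H^{-1}$ followed by the monotonicity/Cauchy--Schwarz estimate on $D\bigl((v_{\e_n}-v)^2,(v_{\e_n}-v)^2\bigr)$. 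Your version is longer but self-contained and makes explicit where positivity of the Coulomb kernel enters; the paper's is shorter at the price of having set up uniform convexity of $\|\cdot\|_E$ beforehand. Both are complete.
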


We point out that, as $\la \to 0$, radial minimizers behave
differently depending on $p$. For $p>18/7$ minimizers tend to
concentrate around zero and blow up in $L^{\infty}$ norm. On the
other hand, for $p<18/7$, it is reasonable to think that the
solutions given in \cite{daprile2, M3AS} are minimizers; those
solutions spread out and are bounded in $L^{\infty}(\R^3)$.

So far, we have always considered the radial case. The last
section of the paper is devoted to investigate the nonradial case.
We point out that the situation in both cases turns out to be very
different.

To start with, we have the following result (to be compared with
Theorem \ref{teo2}):

\begin{theorem} \label{teo5} $E(\R^3) \subset L^p(\R^3)$ for any $p \in
[3,6)$, and the inclusion does not hold for $p<3$.
\end{theorem}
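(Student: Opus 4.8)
The plan is to isolate, in each direction, the exact mechanism at work: for the inclusion, a Gagliardo–Nirenberg–type inequality coupling the Dirichlet energy with the Coulomb energy; for the failure, a family of far‑apart bumps spread over a huge region, against which the Coulomb energy is extremely cheap.

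\emph{Proof that $E(\R^3)\subset L^p(\R^3)$ for $p\in[3,6]$, in particular for $p\in[3,6)$.} The core is the inequality
$$ \|u\|_{L^3(\R^3)}^3\ \le\ C\,\|\nabla u\|_{L^2(\R^3)}\,\Big(\intr\intr\frac{u^2(x)u^2(y)}{|x-y|}\,dx\,dy\Big)^{1/2},\qquad u\in D^{1,2}(\R^3). $$
To prove it, let $\Phi=\tfrac1{4\pi|x|}\star u^2\ge 0$, so that $-\Delta\Phi=u^2$ and, integrating by parts, $\|\nabla\Phi\|_{L^2}^2=\intr u^2\Phi=\tfrac1{4\pi}\intr\intr\frac{u^2(x)u^2(y)}{|x-y|}\,dx\,dy$. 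Assuming first that $u$ is bounded with compact support, so that every integration by parts below is legitimate, and using $|\nabla|u||=|\nabla u|$ a.e.,
$$ \|u\|_{L^3}^3=\intr|u|\,u^2=\intr|u|\,(-\Delta\Phi)=\intr\nabla|u|\cdot\nabla\Phi\ \le\ \|\nabla u\|_{L^2}\,\|\nabla\Phi\|_{L^2}, $$
which is the claimed bound. The general case follows by truncation: for $u\in D^{1,2}(\R^3)$ put $u_n=\eta_n\,T_n(u)$, where $T_n$ truncates at level $n$ and $\eta_n$ is a cut‑off equal to $1$ on $B_n$ and supported in $B_{2n}$; then $u_n\to u$ in $D^{1,2}$, and $|u_n|\le|u|$ forces the Coulomb energy of $u_n$ to be at most that of $u$, so Fatou's lemma passes the estimate to $u$. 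In particular, if $u\in E(\R^3)$ both factors on the right are finite, hence $u\in L^3(\R^3)$ and the inclusion $E(\R^3)\subset L^3(\R^3)$ is continuous. Since also $E(\R^3)\subset D^{1,2}(\R^3)\subset L^6(\R^3)$ by the Sobolev inequality, interpolation ($\|u\|_{L^p}\le\|u\|_{L^3}^{\theta}\|u\|_{L^6}^{1-\theta}$ with $\tfrac1p=\tfrac\theta3+\tfrac{1-\theta}6$) yields $E(\R^3)\subset L^p(\R^3)$ for every $p\in[3,6]$.

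\emph{Proof that the inclusion fails for $p<3$.} Fix $p<3$, fix $0\le g\in C_c^\infty(B_1)$ with $g\not\equiv 0$, and for $R\ge 2$ set
$$ U_R=\sum_{i=1}^{N}h\,g\!\Big(\frac{x-c_i}{R}\Big),\qquad h=R^{-2},\quad N=\lfloor R^3\rfloor, $$
with centres $c_i$ chosen so that $|c_i-c_j|\ge R^4$ for $i\ne j$, whence the $N$ bumps have pairwise disjoint supports. Elementary scaling gives $\|\nabla U_R\|_{L^2}^2=N h^2 R\,\|\nabla g\|_{L^2}^2\sim 1$ and $\|U_R\|_{L^p}^p=N h^p R^3\,\|g\|_{L^p}^p\sim R^{6-2p}$ (all constants independent of $R$); while the Coulomb energy of $U_R$ splits into a diagonal part $N h^4 R^5\,\intr\intr\frac{g^2(x)g^2(y)}{|x-y|}\,dx\,dy\sim 1$ and, for $i\ne j$, a term bounded by $\tfrac{2}{R^4}\big(h^2 R^3\|g\|_{L^2}^2\big)^2$, whose sum over the at most $N^2$ pairs is again $\sim 1$ by the choice of separation. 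Thus $\|\nabla U_R\|_{L^2}$ and the Coulomb energy of $U_R$ stay bounded while $\|U_R\|_{L^p}\to\infty$ as $R\to\infty$, so no continuous inclusion $E(\R^3)\subset L^p(\R^3)$ can hold; and since a set inclusion between these Banach spaces would automatically be continuous (closed graph theorem, comparing a.e.‑convergent subsequences), the inclusion fails outright. Alternatively, summing a lacunary subfamily $\alpha_k U_{R_k}$ with $R_k\to\infty$ fast, mutually far‑apart supports, and $\alpha_k$ chosen so that $\sum_k\|\nabla(\alpha_kU_{R_k})\|_{L^2}^2<\infty$ and the total Coulomb energy is finite while $\sum_k\|\alpha_kU_{R_k}\|_{L^p}^p=\infty$, produces a single $u\in E(\R^3)\setminus L^p(\R^3)$.

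\emph{Main obstacle.} The inclusion half is short once one notices the identity $\intr|u|\,u^2=\intr\nabla|u|\cdot\nabla\Phi$; the only subtlety there is the truncation, needed because one cannot assume $u\in L^3$ beforehand. The failure half is where the real difficulty lies: a single bump, or a thin spherical shell (which is precisely the radial obstruction behind the exponent $18/7$), only defeats $L^p$ for $p$ below $12/5$ (resp.\ $18/7$), and recovering the full range $p<3$ forces one to spread on the order of $R^3$ well‑separated bumps over an enormous region, so that the Coulomb energy — acutely sensitive to how dispersed the mass is — remains bounded while the $L^p$ mass diverges; balancing the four parameters (height, width, number, separation) simultaneously against all three functionals is the delicate point.
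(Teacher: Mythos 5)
Your proof is correct and follows essentially the same route as the paper: the $L^3$ bound via the identity $\int |u|^3=\int\nabla|u|\cdot\nabla\Phi_u$ combined with Sobolev embedding and interpolation, and, for $p<3$, a family of $\sim R^3$ well-separated rescaled bumps — which is exactly the paper's test family $v_N(x)=\lambda_N^2u_N(\lambda_N x)$, $\lambda_N=N^{-1/3}$, written out in explicit parameters. Your truncation justification of the integration by parts and the closed-graph alternative for upgrading ``no continuous inclusion'' to ``no inclusion'' are minor additions to what the paper does by citation and by the disjoint-support series, respectively.
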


Moreover, from \cite{JFA} we know that if $p \in (2,3)$,
$I_{\la}|_{H^1_r}$ is always bounded below and attains its
infimum. It is also easy to prove that the map $\la \mapsto \inf
I_{\la}|_{H^1_r}$ is continuous and tends to $-\infty$ as $\la \to
0$. However:

\begin{theorem} \label{teo6}
If $p \in (2,3)$, there exists $\lambda_0>0$ such that $\inf
I_{\la}$=0 for $\lambda \geq \lambda_0$ and $\inf I_{\la}=-\infty$
for $\la < \la_0$.
\end{theorem}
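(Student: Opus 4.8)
The plan is to exploit the monotonicity of $\lambda\mapsto I_\lambda(u)$ together with a dichotomy showing that $\inf I_\lambda$ can only be $0$ or $-\infty$. First I would record the dichotomy. Since $I_\lambda(0)=0$, we always have $\inf I_\lambda\le 0$. Suppose $I_\lambda(u)<0$ for some $u$ (so, in particular, $D(u):=\intr\intr\frac{u^2(x)u^2(y)}{|x-y|}\,dx\,dy<\infty$). Pick $n$ points $y_1,\dots,y_n$ with $\min_{i\ne j}|y_i-y_j|=R$ and set $w_R=\sum_{i=1}^n u(\cdot-y_i)$. As $R\to\infty$ all the cross terms in $\|\nabla w_R\|_2^2$, $\|w_R\|_2^2$, $\|w_R\|_p^p$ and in $D(w_R)$ tend to $0$ (the standard fact that translated $L^q$ profiles decouple at infinity), so $I_\lambda(w_R)\to n\,I_\lambda(u)$; choosing $R$ large and then $n\to\infty$ gives $\inf I_\lambda=-\infty$. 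Hence $\inf I_\lambda\in\{0,-\infty\}$, and it equals $0$ precisely when $I_\lambda\ge0$ on all of $H^1(\R^3)$. Since $D(u)\ge0$, the map $\lambda\mapsto I_\lambda(u)$ is nondecreasing for each fixed $u$, so $T:=\{\lambda>0:\inf I_\lambda=-\infty\}=\{\lambda>0:\exists\,u,\ I_\lambda(u)<0\}$ is downward closed, i.e. an interval of the form $(0,\lambda_0)$ or $(0,\lambda_0]$ (a priori possibly empty or all of $(0,\infty)$). The theorem is then equivalent to: $T\ne\emptyset$, $T\ne(0,\infty)$, and $\lambda_0:=\sup T\notin T$.

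The inclusion of a small $\lambda$ in $T$ is the easy half. One can invoke \cite{JFA}: $\inf I_\lambda|_{H^1_r}\to-\infty$ as $\lambda\to0$, and since $\inf I_\lambda\le\inf I_\lambda|_{H^1_r}$, for $\lambda$ small there is $u$ with $I_\lambda(u)<0$. Alternatively, fix $\phi\in C_c^\infty(\R^3)$ scaled so that $\tfrac1p\|\phi\|_p^p>\tfrac12\|\phi\|_2^2$, and take $\phi_t=\phi(\cdot/t)$: then $I_\lambda(\phi_t)=\tfrac t2\|\nabla\phi\|_2^2+t^3\big(\tfrac12\|\phi\|_2^2-\tfrac1p\|\phi\|_p^p\big)+\tfrac\lambda4 t^5 D(\phi)$, and choosing $t$ of order $\lambda^{-1/2}$ makes the (negative) cubic term dominate, so $I_\lambda(\phi_t)<0$ for $\lambda$ small. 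Either way $T\ne\emptyset$, hence $\lambda_0=\sup T>0$.

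The fact that $T\ne(0,\infty)$ — i.e. $I_\lambda\ge0$ on $H^1(\R^3)$ for $\lambda$ large — is the \emph{heart of the proof}, and the main obstacle. The tool is a Gagliardo--Nirenberg--Coulomb interpolation inequality: for $p\in(2,3)$ there is $C_p>0$ with
$$\|u\|_{L^p(\R^3)}^p\ \le\ C_p\,\|u\|_{H^1(\R^3)}^{\,4-p}\,D(u)^{\frac{p-2}{2}}\qquad\text{for all }u\in H^1(\R^3),$$
which is a consequence of the scale-invariant estimate $\|u\|_p^p\le C\,\|\nabla u\|_2^{\,p-2}\|u\|_2^{\,2(3-p)}D(u)^{\frac{p-2}{2}}$ (cf. \cite{JFA}; it can also be derived with the help of Theorem~\ref{teolb}). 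Granting this, set $X=\|u\|_{H^1}^2$, $Y=D(u)$. Since $\tfrac{4-p}{2}+\tfrac{p-2}{2}=1$, Young's inequality gives, for every $t>0$, $X^{\frac{4-p}{2}}Y^{\frac{p-2}{2}}\le\tfrac{4-p}{2}tX+\tfrac{p-2}{2}t^{-\frac{4-p}{p-2}}Y$, hence
$$I_\lambda(u)\ \ge\ \tfrac12X+\tfrac\lambda4Y-\tfrac{C_p}{p}\Big(\tfrac{4-p}{2}tX+\tfrac{p-2}{2}t^{-\frac{4-p}{p-2}}Y\Big).$$
Choosing $t$ small enough that $\tfrac{C_p(4-p)}{2p}t\le\tfrac12$ leaves $I_\lambda(u)\ge\big(\tfrac\lambda4-C'\big)Y$ for a constant $C'=C'(p)$, so $I_\lambda\ge0$ whenever $\lambda\ge4C'$. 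Thus $\lambda\notin T$ for such $\lambda$ and $\lambda_0<\infty$.

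It remains to show $\lambda_0\notin T$. For $\lambda>\lambda_0$ we have $\lambda\notin T$, so $\inf I_\lambda=0$ by the dichotomy; for $\lambda<\lambda_0$ we have $\lambda\in T$ since $T$ is downward closed, so $\inf I_\lambda=-\infty$. For $\lambda=\lambda_0$, pick $\lambda_n\downarrow\lambda_0$ with $\lambda_n\notin T$, so $I_{\lambda_n}\ge0$ on $H^1(\R^3)$; for any fixed $u$, $I_{\lambda_0}(u)=I_{\lambda_n}(u)-\tfrac{\lambda_n-\lambda_0}{4}D(u)\ge-\tfrac{\lambda_n-\lambda_0}{4}D(u)\to0$, whence $I_{\lambda_0}(u)\ge0$; therefore $\inf I_{\lambda_0}=0$ and $\lambda_0\notin T$. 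Consequently $\inf I_\lambda=0$ for $\lambda\ge\lambda_0$ and $\inf I_\lambda=-\infty$ for $\lambda<\lambda_0$, with $\lambda_0\in(0,\infty)$. The only genuinely delicate ingredient is the Coulomb--Sobolev inequality used in the large-$\lambda$ step; the dichotomy, the monotonicity in $\lambda$, the decoupling of translated bumps, and the limit argument at $\lambda_0$ are all soft.
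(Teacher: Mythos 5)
Your proof is correct, and its skeleton --- the dichotomy $\inf I_{\la}\in\{0,-\infty\}$ obtained by summing widely separated translates of a function of negative energy, the monotonicity of $\la\mapsto I_{\la}(u)$, and the continuity argument at $\la_0$ --- is exactly the paper's: your dichotomy is the paper's ``Claim'', proved there with the explicit translates $u_N(x)=\sum_{i=1}^N u(x+iN^2e)$ and a uniform bound on the Coulomb cross terms instead of your two-step limit $R\to\infty$, then $n\to\infty$ (both work, and your observation that the Coulomb cross terms of far-apart bumps vanish is valid for general $H^1$ functions via Hardy--Littlewood--Sobolev). The one step you do genuinely differently is the nonnegativity of $I_{\la}$ for large $\la$. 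The paper's route is shorter and quantitative: from \eqref{lions}, i.e. $\intr|u|^3=\intr\langle\nabla\phi_u,\nabla|u|\rangle\le\frac12\intr\bigl(|\nabla u|^2+|\nabla\phi_u|^2\bigr)$, it gets for $\pi\la\ge\frac12$
$$I_{\la}(u)\ \ge\ \intr\Bigl(\tfrac12u^2+|u|^3-\tfrac1p|u|^p\Bigr)dx\ \ge\ 0,$$
using the elementary pointwise bound $\frac1p t^p\le\frac12t^2+t^3$ for $p\in(2,3)$, which even yields $\la_0\le\frac{1}{2\pi}$. Your route goes through the interpolation inequality $\|u\|_p^p\le C\|\nabla u\|_2^{p-2}\|u\|_2^{2(3-p)}D(u)^{(p-2)/2}$ plus Young. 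That inequality is true, but not via Theorem \ref{teolb} as you suggest parenthetically; it follows from $L^2$--$L^3$ interpolation together with the Cauchy--Schwarz form of the very same identity, $\intr|u|^3\le\|\nabla\phi_u\|_2\|\nabla u\|_2=(4\pi)^{-1/2}D(u)^{1/2}\|\nabla u\|_2$. So the two large-$\la$ arguments have the same substance; the paper's is more economical, yours is more systematic and would transfer to other exponents or couplings. The rest (the small-$\la$ scaling $\phi_t=\phi(\cdot/t)$, which the paper replaces by citing the radial result of \cite{JFA}, and the endpoint argument at $\la_0$, which the paper compresses into the upper semicontinuity of $\la\mapsto\inf I_{\la}$) is sound.
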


As we see, the nonlocal term leads to different situations in the
radial and nonradial cases. In order to study in depth this
phenomenon, we consider the problem in a ball:

\begin{equation}\label{eqbola}  \begin{array}{ll} -\Delta u + u + \lambda \left ( u^2 \star \frac{1}{|x|} \right )
u=|u|^{p-2}u, & \mbox{ in }B(0,R) \\ \  u(x)=0  & \mbox{ in }
\partial B(0,R). \end{array} \end{equation}
In the following theorem we obtain a result of breaking of
symmetry of minimizers:

\begin{theorem} \label{teo7}
Suppose that either:
\begin{enumerate}
\item[(1)] $p \in (2,3)$, $\lambda \in (0,\lambda_0)$ and $R$
large enough,
\end{enumerate}

\medskip \noindent or
\begin{enumerate}
\item[(2)] $p \in (18/7,3)$, $\lambda$ small.
\end{enumerate}
Then, $\inf \, I_{\la}|_{H^1_0(B(0,R))}$ is attained at a
nonradial function.
\end{theorem}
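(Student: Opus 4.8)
\textbf{Proof proposal for Theorem \ref{teo7}.}

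The plan is to produce, in each of the two regimes, a nonradial test function whose energy lies strictly below the infimum over radial functions, and then to combine this with the existence of a minimizer over $H^1_0(B(0,R))$. For the existence of a minimizer: in case (1), by Theorem \ref{teo6} we have $\inf I_{\la}|_{H^1(\R^3)}=-\infty$ for $\la<\la_0$, but on the bounded domain $B(0,R)$ the functional $I_{\la}|_{H^1_0(B(0,R))}$ is coercive (the Coulomb term is nonnegative, the $H^1$ norm controls the $L^p$ norm by Sobolev embedding on a bounded domain since $p<6$), hence bounded below, and weakly lower semicontinuous; by compactness of $H^1_0(B(0,R))\hookrightarrow L^p(B(0,R))$ the infimum is attained. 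The same argument handles case (2). So in both cases a minimizer exists; it remains to show it cannot be radial, which we do by exhibiting a nonradial competitor with strictly lower energy than $\inf I_{\la}|_{H^1_{0,r}(B(0,R))}$.

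The core mechanism is that the Coulomb repulsion penalizes concentrated radial configurations: splitting a bump into several far-apart bumps reduces the self-interaction. Concretely, fix a smooth compactly supported profile $\psi$ and consider, for large $R$, the function $w(x)=\sum_{i=1}^k \psi(x-y_i)$ where the centers $y_i$ are placed inside $B(0,R)$ at mutual distances of order $R$ (and at distance of order $R$ from the boundary), arranged so that $w$ is not radial (e.g., $k\ge 2$ bumps placed asymmetrically, or $k$ bumps at the vertices of a polytope that is not centered at the origin). Then $\int(|\nabla w|^2+w^2)$ and $\int|w|^p$ are exactly $k$ times the one-bump values (the supports are disjoint for $R$ large), while the Coulomb energy is $k$ times the self-energy of $\psi$ plus cross terms of size $O(k^2/R)$, which tend to $0$ as $R\to\infty$. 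Hence $I_{\la}(w)\to k\cdot(\text{one-bump energy})$ as $R\to\infty$, and by rescaling the single bump one can drive the one-bump energy negative (this is exactly the blow-up/scaling that makes $\inf I_{\la}=-\infty$ in regime (1)). On the other hand, a radial competitor of the same low energy would have to concentrate all its mass near the origin in a single radial bump, and its Coulomb self-energy cannot be reduced by spreading into far-apart lumps while staying radial; making this comparison precise — showing $\inf I_{\la}|_{H^1_{0,r}}$ is strictly larger than the multi-bump value — is the crux. For regime (2) one instead uses Theorem \ref{teo4}: as $\la\to 0$ the radial minimizer over $\R^3$, after the rescaling $u_\la=\e^{-2/(p-2)}v_\e(x/\e)$, converges in $E$ to a minimizer $v\in K$ of $J$, which lives on all of $E_r\subset L^p$ (Theorem \ref{teo2}); rescaled back, $u_\la$ spreads over a region of size $\e^{-1}\to\infty$, so for $\la$ small it fits inside $B(0,R)$ and $\inf I_{\la}|_{H^1_{0,r}(B(0,R))}$ is close to $\inf I_{\la}|_{H^1_r(\R^3)}$. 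One then perturbs this radial near-minimizer by the same multi-bump splitting trick and computes that the nonradial version strictly lowers the energy, using that in the limit problem the minimizer of $J$ is not optimal against a two-bump configuration placed far apart — or, more directly, that $\inf J < 2\inf J$ fails in a way favourable to splitting, since $\inf J<0$ and scaling out two copies gains the factor $2$ on the negative part while the Coulomb cross-term is negligible.

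The main obstacle is the strict inequality $I_{\la}(w_{\mathrm{nonrad}}) < \inf I_{\la}|_{H^1_{0,r}}$: one must rule out that some clever radial configuration matches the energy of the spread-out nonradial competitor. In regime (1) this follows because any radial function in $B(0,R)$ with very negative energy must, by the scaling structure, be a single concentrated bump near the origin (a radial function cannot have two disjoint lumps far from each other without one of them being far from the origin in a way that forces either large Coulomb cross-energy across the sphere or, if it is a thin spherical shell, a large $H^1$ cost relative to its $L^p$ gain — this is where the radial Sobolev/Strauss-type decay and Theorem \ref{teo2}'s threshold $18/7$ could enter), whereas the $k$-bump nonradial competitor with $k\ge 2$ strictly beats the best single bump by the factor $k$ on the negative leading term. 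In regime (2), the same issue reduces to a strict subadditivity statement for $\inf J$ on the limit problem, which one proves by explicit two-bump testing in $E$. I would organize the write-up as: (i) existence of a minimizer on the ball via coercivity and compact embedding; (ii) construction of the nonradial multi-bump competitor and the estimate $I_{\la}(w)\to k\cdot e_1$ where $e_1<0$; (iii) the matching lower bound showing no radial function does as well, using the scaling dichotomy in case (1) and the convergence result Theorem \ref{teo4} together with strict subadditivity of $\inf J$ in case (2); (iv) conclude that the minimizer is nonradial.
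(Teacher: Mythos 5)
Your overall strategy --- exhibit a nonradial competitor whose energy lies strictly below the radial infimum, then invoke existence of a minimizer on the ball --- is the same as the paper's, and your multi-bump competitor is exactly the mechanism the paper exploits (it is the construction behind Theorems \ref{teo5} and \ref{teo6}). However, there are two genuine gaps. The first is in your existence step: dropping the nonnegative Coulomb term and using the Sobolev embedding on the bounded domain gives only $I_{\lambda}(u)\ge \frac12\|u\|_{H^1}^2-C\|u\|_{H^1}^p$ with $p>2$, which is \emph{not} bounded below, so your claimed coercivity argument fails. Coercivity of $I_{\lambda}|_{H^1_0(B(0,R))}$ genuinely needs the Coulomb term: as in \eqref{lions}, one bounds $c_\lambda\int|u|^3$ by $\frac14\int|\nabla u|^2+\frac{\lambda}{4}\int|\nabla\phi_u|^2$, and then $c_\lambda|u|^3-\frac1p|u|^p$ is bounded below pointwise precisely because $p<3$.

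The second and more serious gap is the step you yourself call ``the crux'': showing that no radial function in $H^1_{0,r}(B(0,R))$ matches the low energy of the spread-out nonradial competitor. Your sketch (a radial function cannot carry far-apart lumps without a Strauss-type penalty) is not a proof, and it is not how one should argue. The missing observation is much simpler: extension by zero gives $H^1_{0,r}(B(0,R))\subset H^1_r(\R^3)$, hence $\bar m(R,\lambda)=\inf I_{\lambda}|_{H^1_{0,r}(B(0,R))}\ge \inf I_{\lambda}|_{H^1_r(\R^3)}$, which is \emph{finite} for every $\lambda>0$ and $p\in(2,3)$ by the coercivity result quoted as Proposition \ref{ta} (from \cite{JFA}). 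In case (1) this lower bound is independent of $R$, while $m(R,\lambda)\to\inf I_{\lambda}=-\infty$ as $R\to\infty$ by Theorem \ref{teo6}, and the conclusion is immediate. In case (2), where both infima diverge as $\lambda\to0$, the analogous uniform bound appears only after the rescaling: $\bar c(\e)\ge\inf J|_{E_r}>-\infty$ by Theorem \ref{teo3}, while $c(\e)\to-\infty$ because Theorem \ref{teo5} ($E\not\subset L^p$ for $p<3$) supplies compactly supported $u$ with $\|u\|_E\le1$ and $\int|u|^p$ arbitrarily large, which fit in $B(0,R/\e)$ for $\e$ small. Your two-bump subadditivity idea for case (2) can be made rigorous ($c(\e)\le 2\min J+o(1)<\min J\le\bar c(\e)$ since $\min J<0$), but as written you never assemble this chain, and in particular you never state the uniform lower bound on the radial side --- which is the fact the entire proof hinges on in both cases.
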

Observe that here the well-known Gidas-Ni-Nirenberg result
(\cite{gnn}) does not hold (because of the nonlocal term).
Nonradial ground states have been found in other frameworks
previously, like in the H{\'e}non equation, see \cite{smets}. However,
observe that in our case it is a free minimizer of the energy
functional, not a minimizer under a certain constraint. In
particular, it is an orbitally stable solution in the sense of
\cite{calions} (see also Remark \ref{yata}).

The paper is organized as follows. In Section 2 we establish some
notations and we present a preliminary study of the space $E$. In
Section 3 we prove Theorem \ref{teolb}, which provides us with a
general inequality that will be used in the following section.
Section 4 is devoted to the radial case; we prove Theorems
\ref{teo2}, \ref{teo3}, and \ref{teo4}. In Section 5 we deal with
the nonradial case, and prove Theorems \ref{teo5}, \ref{teo6} and
\ref{teo7}.

%

\section{Preliminaries}
In this section we establish some notation that will be used
throughout the paper. We also define the space $E$ and study some
basic properties of it.

We will use the following common notations:

\begin{itemize}

\item $C_0^{\infty}(\R^N)$ is the set of $C^{\infty}$ functions
with compact support.

\item $H^1(\R^3)=\{u \in L^2(\R^3):\ |\nabla u| \in L^2(\R^3)\} $
is the usual Sobolev space, and $\| \cdot \|_{H^1}$ denotes its
norm.

\item $D^{1,2}(\R^N)=\{u  \in L^{\frac{2N}{N-2}}(\R^N):\ |\nabla
u| \in L^2(\R^N)\}$, with the usual norm $\|u\|_{D}= \| \nabla
u\|_{L^2}$.

\item Given any $\O \subset \R^N$ a smooth domain, we
denote by $H^1_0(\O)$ as the completion of $C^{\infty}_0(\O)$ with
the $\|\cdot \|_{H^1}$ norm.

\item We write $C_{0,r}^{\infty}(\R^N)$, $H^1_r(\R^N)$,
$D^{1,2}_r(\R^N)$, $H^1_{0,r}(B(0,R))$ to denote the corresponding
subspaces of radial functions.

\end{itemize}

\begin{defi} We define the space $E$:
$$ E=E(\R^N)= \{ u \in
D^{1,2}(\R^N): \int_{\R^N} \int_{\R^N} \frac{u^2(x)
u^2(y)}{|x-y|^{N-2}}\, dx \, dy < +\infty\}$$ That is, $E(\R^N)$
is the space of functions in $D^{1,2}(\R^N)$ such that the Coulomb
energy of the charge is finite. We denote by $E_r=E_r(\R^N)$ to
the subspace of radial functions.
\end{defi}

\bigskip We begin by studying some elementary properties of $E$:

\begin{proposition} \label{app} Let us define, for any $u\in E$,
$$ \| u \|_E = \left ( \int_{\R^N}
|\nabla u(x)|^2\, dx  + \left ( \int_{\R^N} \int_{\R^N}
\frac{u^2(x) u^2(y)}{|x-y|^{N-2}}\, dx \, dy \right ) ^{1/2}
\right )^{1/2}.$$ Then, $\| \cdot \|_E$ is a norm, and $(E, \|
\cdot \|_E)$ is a uniformly convex Banach space. Moreover,
$C_0^{\infty}(\R^N)$ is dense in $E$, and also
$C_{0,r}^{\infty}(\R^N)$ is dense in $E_r$.
\end{proposition}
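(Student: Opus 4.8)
The plan is to view $\|\cdot\|_E$ as assembled from the gradient norm $\|\nabla u\|_{L^2}$ of $D^{1,2}(\R^N)$ and the \emph{Coulomb seminorm} $\|u\|_C:=\left(\int_{\R^N}\int_{\R^N}\frac{u^2(x)u^2(y)}{|x-y|^{N-2}}\,dx\,dy\right)^{1/4}$, so that $\|u\|_E^2=\|\nabla u\|_{L^2}^2+\|u\|_C^2$. Write $\langle\rho,\sigma\rangle_*=\int_{\R^N}\int_{\R^N}\frac{\rho(x)\sigma(y)}{|x-y|^{N-2}}\,dx\,dy$ and $\|\rho\|_*=\langle\rho,\rho\rangle_*^{1/2}$; since the kernel $|x|^{2-N}$ is positive definite (a classical fact, see \cite{lieb}), $\langle\cdot,\cdot\rangle_*$ is an inner product and $\|\cdot\|_*$ a norm on $\{\rho:\langle|\rho|,|\rho|\rangle_*<\infty\}$, which is moreover monotone on nonnegative functions because the kernel is nonnegative. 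Note $\|u\|_C^2=\|u^2\|_*$. As $s\mapsto s^2$ is convex, $((1-t)u+tv)^2\le(1-t)u^2+tv^2$ pointwise, and monotonicity plus convexity of $\|\cdot\|_*$ give $\|(1-t)u+tv\|_C^2\le(1-t)\|u\|_C^2+t\|v\|_C^2$; so $\|\cdot\|_C^2$ is convex, $2$-homogeneous, and vanishes only at $0$ (positive definiteness), whence $\|\cdot\|_C$ is a norm by the elementary remark that $\sqrt g$ is a norm whenever $g\ge0$ is convex, $2$-homogeneous and nondegenerate (apply convexity with weights $\tfrac{a}{a+b},\tfrac{b}{a+b}$ to the decomposition $x+y=\tfrac{a}{a+b}\cdot\tfrac{(a+b)x}{a}+\tfrac{b}{a+b}\cdot\tfrac{(a+b)y}{b}$, $a=\sqrt{g(x)}$, $b=\sqrt{g(y)}$). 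Finally $\|\cdot\|_E$ is a norm: homogeneity is clear, definiteness comes from $\|\nabla u\|_{L^2}=0\Rightarrow u=0$ in $D^{1,2}$, and the triangle inequality follows from those of $\|\nabla\cdot\|_{L^2}$ and $\|\cdot\|_C$ together with the triangle inequality of the Euclidean norm on $\R^2$.

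For completeness, let $(u_n)$ be Cauchy in $E$. Then $(u_n)$ is Cauchy in $D^{1,2}(\R^N)$, so $u_n\to u$ in $D^{1,2}$ and, along a subsequence, a.e. Given $\e>0$ pick $n_0$ with $\|u_n-u_m\|_C<\e$ for $n,m\ge n_0$; for fixed $n\ge n_0$, Fatou's lemma applied to the nonnegative integrands $\frac{(u_n-u_{m_k})^2(x)(u_n-u_{m_k})^2(y)}{|x-y|^{N-2}}$ along the a.e.-convergent subsequence yields $\|u_n-u\|_C^4\le\liminf_k\|u_n-u_{m_k}\|_C^4\le\e^4$. Hence $u_n-u$ has finite Coulomb energy, so $u\in E$ (triangle inequality in $\|\cdot\|_C$), and $\|u_n-u\|_E\to0$; thus $E$ is a Banach space.

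The main point is uniform convexity. For the gradient part there is the exact parallelogram law $\|\nabla\tfrac{u+v}{2}\|_{L^2}^2=\tfrac12\|\nabla u\|_{L^2}^2+\tfrac12\|\nabla v\|_{L^2}^2-\|\nabla\tfrac{u-v}{2}\|_{L^2}^2$. For the Coulomb part I exploit the Hilbert structure of $\langle\cdot,\cdot\rangle_*$: from $\tfrac{u^2+v^2}{2}=(\tfrac{u+v}{2})^2+(\tfrac{u-v}{2})^2$ and positivity of the cross term $\langle(\tfrac{u+v}{2})^2,(\tfrac{u-v}{2})^2\rangle_*\ge0$ one gets $\|(\tfrac{u+v}{2})^2\|_*^2\le\|\tfrac{u^2+v^2}{2}\|_*^2-\|(\tfrac{u-v}{2})^2\|_*^2$; combined with $\|\tfrac{u^2+v^2}{2}\|_*\le\tfrac12\|u\|_C^2+\tfrac12\|v\|_C^2=:\gamma$, the bound $\|\tfrac{u-v}{2}\|_C^2\le\gamma$ (triangle inequality in $\|\cdot\|_C$) and $\sqrt{\gamma^2-t^2}\le\gamma-\tfrac{t^2}{2\gamma}$, this gives $\|\tfrac{u+v}{2}\|_C^2\le\tfrac12\|u\|_C^2+\tfrac12\|v\|_C^2-\tfrac1{2\gamma}\|\tfrac{u-v}{2}\|_C^4$. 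Now assume $\|u\|_E,\|v\|_E\le1$ and $\|u-v\|_E\ge\e$; then $\gamma\le1$, and adding the gradient and Coulomb estimates yields $\|\tfrac{u+v}{2}\|_E^2\le\tfrac12\|u\|_E^2+\tfrac12\|v\|_E^2-\|\nabla\tfrac{u-v}{2}\|_{L^2}^2-\tfrac12\|\tfrac{u-v}{2}\|_C^4\le1-p-\tfrac12q^2$, where $p=\|\nabla\tfrac{u-v}{2}\|_{L^2}^2$ and $q=\|\tfrac{u-v}{2}\|_C^2$ satisfy $p+q\ge\e^2/4$. Splitting into the cases $q\ge\e^2/8$ and $q<\e^2/8$, in each case $p+\tfrac12q^2\ge c\,\e^4$ for a universal $c>0$, so $\|\tfrac{u+v}{2}\|_E\le(1-c\e^4)^{1/2}\le1-\tfrac{c}{2}\e^4$. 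This is the delicate step: identifying the right ``modified parallelogram inequality'' for the quartic Coulomb functional and combining the two incomparable contributions to $\|u-v\|_E$.

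For density it suffices to approximate a general $u\in E$ in three steps, each justified by dominated convergence. First the truncations $u_M=\max\{-M,\min\{M,u\}\}$ satisfy $\|u-u_M\|_E\to0$: for the gradient part $\nabla(u-u_M)=\nabla u\,\mathbf 1_{\{|u|\ge M\}}\to0$ in $L^2$, and for the Coulomb part the integrand is dominated by $\frac{u^2(x)u^2(y)}{|x-y|^{N-2}}$ and tends to $0$ a.e. Next, fixing a radial cut-off $\chi\in C_0^\infty(\R^N)$ with $\chi\equiv1$ near $0$, the functions $\chi(x/R)u$ converge to $u$ in $E$ as $R\to\infty$ (the term $u\,\nabla(\chi(\cdot/R))$ is bounded in $L^2$ by a constant times $\|u\|_{L^{2^*}}$ on the annulus $\{R\le|x|\le2R\}$, which tends to $0$, and the Coulomb part again by dominated convergence). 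Finally, for $u\in E$ bounded with compact support (hence $u\in H^1$), mollification with a radial mollifier $\r_\e$ gives $u*\r_\e\in C_0^\infty(\R^N)$ with $u*\r_\e\to u$ in $H^1$ and $\|u*\r_\e-u\|_C\to0$, because for $w_\e=u*\r_\e-u$, supported in a fixed compact $K$ and bounded in $L^\infty$, one has $\int\int_{K\times K}\frac{w_\e^2(x)w_\e^2(y)}{|x-y|^{N-2}}\le\|w_\e\|_{L^\infty}^2\big(\sup_x\int_K\frac{dy}{|x-y|^{N-2}}\big)\|w_\e\|_{L^2}^2\to0$. Since all three operations preserve radial symmetry, the very same chain of approximations shows that $C_{0,r}^\infty(\R^N)$ is dense in $E_r$.
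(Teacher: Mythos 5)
Your proof is correct, and while it shares the paper's overall skeleton --- isolate the quartic Coulomb functional $T(u)=D(u^2,u^2)^{1/4}$, establish a Clarkson-type inequality $T\left(\tfrac{u+v}{2}\right)^4+T\left(\tfrac{u-v}{2}\right)^4\le\tfrac12\left(T(u)^4+T(v)^4\right)$, and combine with the Hilbertian gradient norm --- it executes nearly every step by a different route. For the triangle inequality the paper expands $D((u+v)^2,(u+v)^2)$ and bounds each cross term via $D(f,g)^2\le D(f,f)D(g,g)$; you instead observe that $\|u\|_C^2=\|u^2\|_*$ is convex (pointwise convexity of $s\mapsto s^2$ plus monotonicity of $\|\cdot\|_*$ on nonnegative densities) and invoke the elementary fact that the square root of a nonnegative, convex, $2$-homogeneous, nondegenerate functional is a norm. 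For uniform convexity you derive the quartic inequality from the pointwise identity $\tfrac{u^2+v^2}{2}=\left(\tfrac{u+v}{2}\right)^2+\left(\tfrac{u-v}{2}\right)^2$ together with positivity of the cross term in $\langle\cdot,\cdot\rangle_*$, and then combine the gradient and Coulomb contributions by hand through a case analysis, obtaining an explicit modulus of order $\e^4$; the paper derives the same quartic inequality by its expansion technique and delegates the combination step to the unproved abstract Lemma \ref{lemmino}, so your version is more self-contained (and quantitative). For completeness the paper shows that $\frac{u_n(x)u_n(y)}{|x-y|^{(N-2)/2}}$ is Cauchy in $L^2(\R^{2N})$ and identifies the limit pointwise, whereas your Fatou argument along an a.e.-convergent subsequence is shorter and equally rigorous. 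For density the paper passes through the inclusion $H^1(\R^N)\subset E$ (which rests on the Hardy--Littlewood--Sobolev bound \eqref{desig}) followed by a single cutoff, while your truncation--cutoff--mollification chain avoids that inclusion entirely; your remark that all three operations preserve radial symmetry correctly settles the corresponding statement for $E_r$.
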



\begin{proof}

We will need the following general result, that must be
well-known. Its proof is elementary and will be skipped.


\begin{lemma} \label{lemmino}
Let $X$ be a vectorial space, $p$, $q$ two uniformly convex norms
on $X$, and $\|x\|= \sqrt{p(x)^2+q(x)^2}$. Then $\|\cdot\|$ is
also a uniformly convex norm.
\end{lemma}

%

\bigskip

As usually, (see \cite{catto1, catto2, lieb}), let us define
$$D(f,g)= \int_{\R^N}
\int_{\R^N} \frac{f(x) g(y)}{|x-y|^{N-2}}\, dx \, dy.$$ The
inequality $D(f,g)^2 \leq D(f,f) D(g,g)$ is well-known, see
\cite{lieb}, page 214.

We now show that $\| \cdot \|_E$ is a uniformly convex norm. By
Lemma \ref{lemmino}, it suffices to deal with
$T(u)=D(u^2,u^2)^{1/4}$. First, we show that it satisfies the
triangular inequality. Actually,
$$ D((u+v)^2,(u+v)^2)= D(u^2,u^2) + D(v^2,v^2) + 4 D(u^2,uv)+ 4D(v^2,uv) + 4D(uv,uv) + 2 D(u^2,v^2).$$
We now estimate:
$$ D(u^2,v^2) \leq \sqrt{D(u^2,u^2)D(v^2,v^2)}. $$
In the next computation, we just use H\"{o}lder inequality:
$$ D(uv,uv)= \int_{\R^N}
\int_{\R^N} \frac{u(x)v(x)u(y)v(y)}{|x-y|^{N-2}}\, dx \, dy \leq
\sqrt{D(u^2,u^2)D(v^2,v^2)} .$$ Moreover:
$$ D(u^2, uv) \leq (D(u^2,u^2) D(uv, uv))^{1/2} \leq \left (
D(u^2,u^2) \sqrt{D(u^2,u^2)D(v^2,v^2)}\right )^{1/2}.$$ An
analogous estimate works for $D(v^2,uv)$. Putting all estimates
together, we obtain $T(u+v) \leq T(u)+T(v)$.

With respect to uniform convexity, we can argue as before to
obtain the following inequality: for any $u$, $v\in E$,
$$T\left( \frac{u+v}{2}\right )^4 +T\left( \frac{u-v}{2}\right )^4 \leq \frac{T(u)^4 + T(v)^4}{2}.$$
This readily implies that $T$ is uniformly convex.

In order to show that $E$ is a Banach space take a Cauchy sequence
$u_n$ in $E$. Clearly, $u_n$ is a Cauchy sequence in $D^{1,2}$; we
now show that $\frac{u_n(x)u_n(y)}{|x-y|^{\frac{N-2}{2}}}$ is also
a Cauchy sequence in $L^2(\R^{2N})$. Indeed:
$$ \int_{\R^N} \int_{\R^N} \frac{(u(x)u(y)-v(x)v(y))^2}{|x-y|^{N-2}}\, dx \, dy = $$
$$  \int_{\R^N} \int_{\R^N} \frac{(u(x)u(y)-u(x)v(y)+u(x)v(y)-v(x)v(y))^2}{|x-y|^{N-2}}\, dx \, dy \leq  $$
$$2 \int_{\R^N} \int_{\R^N}
\frac{u(x)^2(u(y)-v(y))^2 + v(y)^2(u(x)-v(x))^2}{|x-y|^{N-2}}\, dx
\, dy =$$
$$ 2 \int_{\R^N} \int_{\R^N}
\frac{(u(x)^2+v(x)^2)(u(y)-v(y))^2}{|x-y|^{N-2}}\, dx \, dy =2 D(
u^2+v^2,(u-v)^2) \leq $$$$ 2 \sqrt{
D(u^2+v^2,u^2+v^2)D((u-v)^2,(u-v)^2)}.$$

So, $u_n \to u$ in $D^{1,2}$ and
$\frac{u_n(x)u_n(y)}{|x-y|^{\frac{N-2}{2}}} \to \psi(x,y)$ in
$L^2$. Passing to a subsequence and by uniqueness of pointwise
convergence we conclude that $\frac{u(x)u(y)}{\
|x-y|^{\frac{N-2}{2}}} = \psi(x,y)$.

Finally, observe that $C_0^{\infty}(\R^N)$ is dense in
$H^1(\R^N)$, and hence $\overline{C_0^{\infty}(\R^N)}^{\, E}
\supset H^1(\R^N)$. So, it suffices to show that $H^1(\R^N)$ is
dense. Take $u \in E$, and choose $\xi \in
C^{\infty}_{0,r}(\R^N)$, $\xi(x)=1$ for $x \in B(0,1)$. It follows
easily that $u_n(x)= \xi(\frac{x}{n})u(x)$ belongs to $H^1(\R^N)$
and $u_n \to u$ in $E$.

Analogously we can argue for the radial case.

\end{proof}

Let us define $\phi_u = \frac{1}{4 \pi |x|^{N-2}} \star u^2$; then,
$u \in E$ if and only if both $u$ and $\phi_u$ belong to
$D^{1,2}(\R^N)$. In such case, $-\Delta \phi_u = u^2$ in a weak
sense, and
$$ \int_{\R^N} |\nabla \phi_u(x)|^2\, dx = \int_{\R^N} \phi_u(x) u(x)^2\, dx = \int_{\R^N}
\int_{\R^N} \frac{u^2(x) u^2(y)}{4 \pi |x-y|^{N-2}}\, dx \, dy.$$

\begin{proposition} Given a sequence $\{u_n\}$ in $E$, $u_n
\rightharpoonup u$ in $E$ if and only if $u_n \rightharpoonup u$
in $D^{1,2}$ and $\int_{\R^N} \int_{\R^N} \frac{u_n^2(x)
u_n^2(y)}{ |x-y|^{N-2}}\, dx \, dy$ is bounded. In such case,
$\phi_n \rightharpoonup  \phi_u$ in $D^{1,2}$, where $\phi_n =
\phi_{u_n}$.
\end{proposition}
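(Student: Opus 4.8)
The plan is to reduce the statement to two facts: the inclusion $E\hookrightarrow D^{1,2}(\R^N)$ is continuous, and $E$ is reflexive. The first is immediate from the definition of $\|\cdot\|_E$, since $\|\nabla u\|_{L^2}^2\le\|u\|_E^2$, and a continuous linear map is automatically continuous between the corresponding weak topologies. The second follows from Proposition~\ref{app}, because a uniformly convex Banach space is reflexive.

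For the ``only if'' implication, assume $u_n\rightharpoonup u$ in $E$. Then $\{u_n\}$ is bounded in $E$ (weakly convergent sequences are bounded in a normed space), and this is exactly the asserted boundedness of the Coulomb energies $\int_{\R^N}\int_{\R^N}\frac{u_n^2(x)u_n^2(y)}{|x-y|^{N-2}}\,dx\,dy$; moreover weak--weak continuity of the inclusion gives $u_n\rightharpoonup u$ in $D^{1,2}$.

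The ``if'' implication is the main point, and I would prove it by a compactness-plus-uniqueness argument. The hypotheses say precisely that $\|u_n\|_E$ is bounded. Since $E$ is reflexive, any subsequence of $\{u_n\}$ admits a further subsequence $u_{n_k}\rightharpoonup w$ in $E$ for some $w\in E$. By the ``only if'' part just proved, $u_{n_k}\rightharpoonup w$ in $D^{1,2}$ as well; but $u_{n_k}\rightharpoonup u$ in $D^{1,2}$ by hypothesis, and weak limits in $D^{1,2}$ are unique, so $w=u$. Hence $u\in E$ and every subsequence of $\{u_n\}$ has a further subsequence converging weakly to $u$ in $E$; since a sequence converges whenever every subsequence has a subsequence converging to the same limit, we get $u_n\rightharpoonup u$ in $E$.

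It remains to show $\phi_n\rightharpoonup\phi_u$ in $D^{1,2}$ in this case. The identity $\int_{\R^N}|\nabla\phi_n|^2\,dx=\frac{1}{4\pi}\int_{\R^N}\int_{\R^N}\frac{u_n^2(x)u_n^2(y)}{|x-y|^{N-2}}\,dx\,dy$ recalled before the statement shows that $\{\phi_n\}$ is bounded in $D^{1,2}$, so by the subsequence principle it suffices to check that every weak limit point $\psi$ of $\{\phi_n\}$ in $D^{1,2}$ equals $\phi_u$. To identify such a $\psi$, obtained as $\phi_{n_k}\rightharpoonup\psi$, I would pass to the limit in the weak formulation $\int_{\R^N}\nabla\phi_{n_k}\cdot\nabla\varphi\,dx=\int_{\R^N}u_{n_k}^2\varphi\,dx$ for $\varphi\in C_0^\infty(\R^N)$: the left-hand side tends to $\int_{\R^N}\nabla\psi\cdot\nabla\varphi\,dx$ by weak convergence, while for the right-hand side I would use that $u_n\rightharpoonup u$ in $D^{1,2}$ implies, by Rellich's theorem on a ball containing the support of $\varphi$, that $u_n\to u$ in $L^2_{loc}(\R^N)$, hence $u_n^2\to u^2$ in $L^1_{loc}(\R^N)$ and $\int_{\R^N}u_{n_k}^2\varphi\,dx\to\int_{\R^N}u^2\varphi\,dx$. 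Thus $-\Delta\psi=u^2$ weakly with $\psi\in D^{1,2}(\R^N)$, so $\psi=\phi_u$ by uniqueness of the $D^{1,2}$ solution. The only genuinely non-formal step is this last passage to the limit in the quadratic term $u_n^2$, which is why local compactness of the embedding --- not merely weak convergence in $D^{1,2}$ --- is needed.
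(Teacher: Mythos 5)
Your proof is correct and follows essentially the same route as the paper: reflexivity of $E$ via uniform convexity, extraction of weakly convergent subsequences identified through the uniqueness of the weak limit in $D^{1,2}$ (the paper phrases this as a contradiction with a weak neighborhood, you as the subsequence principle --- the same argument), and then identification of the weak limit of $\phi_n$ through the weak formulation of $-\Delta\phi_n=u_n^2$ together with local strong $L^2$ convergence. Your write-up is in fact slightly more explicit than the paper's on the passage $u_{n}^2\to u^2$ in $L^1_{loc}$, which the paper only asserts.
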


\begin{proof}

Clearly, the implication to the right is obvious. Suppose now that
$u_n\rightharpoonup u$ in $D^{1,2}(\R^N)$ and $\int_{\R^N}
\int_{\R^N} \frac{u_n^2(x) u_n^2(y)}{|x-y|^{N-2}}\, dx \, dy$ is
bounded. In particular, $\|u_n\|_E$ is bounded.

Suppose, reasoning by contradiction, that $u_n$ does not converge
weakly to $u$ in $E$. So, there exists a neighborhood of $u$ in
the weak topology and a subsequence (still denoted by $u_n$) such
that $u_n \notin V$. Being $E$ uniformly convex, it is reflexive
(see \cite{brezis}, Theorem III.29), and hence $u_n
\rightharpoonup v $ in $E$ (up to another subsequence) for some $v
\in E$. But this implies that $u_n\rightharpoonup v$ in
$D^{1,2}(\R^N)$, and by uniqueness, $v=u$. This contradicts $u_n
\notin V$.

In order to prove that $\phi_n \rightharpoonup \phi_u$ in
$D^{1,2}$, observe that $\phi_n$ is bounded in $D^{1,2}(\R^N)$,
and therefore $\phi_n \rightharpoonup \phi$ in $D^{1,2}$ for some
$\phi$ (up to a subsequence but, again, this suffices).

Take any $\rho \in C^{\infty}_0(\R^N)$ and compute:
$$ \int_{\R^N} \nabla \phi \cdot \nabla \rho \leftarrow
\int_{\R^N} \nabla \phi_n \cdot \nabla \rho = \int_{\R^N} u_n^2 \rho
\to \int_{\R^N} u^2 \rho.$$ The last convergence follows from the
fact that $(u_n)|_K \to u|_K$ strongly in $L^2$ for any compact set
$K$. Hence, $-\Delta \phi = u^2$, that is, $\phi=\phi_u$.
\end{proof}

\section{A lower bound of the Coulomb energy}

In this section we study some bounds of the Coulomb energy, that
will be of use later on. In particular, we prove Theorem
\ref{teolb}, in which a lower bound of the Coulomb energy is
given.

\medskip

First, by using Hardy-Littlewood-Sobolev inequality (see
\cite{lieb}, page 98), we have the following bound on the Coulomb
energy:
\begin{equation} \label{desig} \int_{\R^N} \int_{\R^N}
\frac{u^2(x) u^2(y)}{|x-y|^{N-2}}\, dx \, dy \leq C \|
u\|_{L^{\frac{4N}{N+2}}}^4.\end{equation} In particular, we have
that $D^{1,2}(\R^N) \cap L^{\frac{4N}{N+2}}(\R^N)\subset E(\R^N) $
continuously.

Let us consider now $E_r(\R^N)$ the subspace of $E$ of radially
symmetric functions. In such case, also $\phi_u$ is a radial
function. For this subspace, we obtain another upper bound:

$$ \| \phi_u\|_D^2 =\int_{\R^N} |\nabla \phi_u(x)|^2\, dx =
\int_{\R^N} \phi_u(x) u(x)^2\, dx \leq C \| \phi_u\|_D \int_{\R^N}
u(x)^2|x|^{-\frac{N-2}{2}}\, dx \Rightarrow $$ \begin{equation}
\label{peso} \int_{\R^N} \int_{\R^N} \frac{u^2(x)
u^2(y)}{|x-y|^{N-2}}\, dx \, dy  \leq C \left ( \int_{\R^N}
u(x)^2|x|^{-\frac{N-2}{2}}\, dx \right )^2.
\end{equation}
In the estimates above we have used the point-wise estimate:
$$v(x) \leq C \|v\|_D
|x|^{-\frac{N-2}{2}} \ \forall \ x \neq 0,$$ for some $C>0$ and
for every $v \in D^{1,2}_r(\R^N)$. This estimate appears in
\cite{blions} (page 340) for $|x|>1$, but a rescaling argument
implies its validity for any $x\neq 0$. Therefore, we have that
$D_r^{1,2}(\R^N) \cap L^2(\R^N, |x|^{-\frac{N-2}{2}}\, dx) \subset
E_r$.

We point out that estimate \eqref{peso} does not hold if $u$ is
not radial, as one can easily check by making use of translations.

Both \eqref{desig} and \eqref{peso} are upper bounds of the
Coulomb energy, which provide us with sufficient conditions for
$u$ to belong to $E$. In this section we prove Theorem \ref{teolb}
which provides us with a \emph{necessary condition}, that is, a
\emph{lower bound}. As far as we know, no lower bound of the
Coulomb energy has been given in the literature so far.

The main result of this section is the following:

\begin{theorem} \label{teo1}
Let $q>0$, $\a
>1/2$. Then there exists $c=c(q,\a)>0$ such that for any $f: \R^N \to [0,+\infty)$ measurable, we have:
$$ \int_{\R^N} \int_{\R^N} \frac{f(x)
f(y)}{|x-y|^{q}}\, dx \, dy \geq c \left ( \int_{\R^N}
\frac{f(x)}{|x|^{\frac{q}{2}} (1+ \left |\log |x| \right
|)^{\a}}\, dx \right )^2.$$
\end{theorem}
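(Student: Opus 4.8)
The plan is to combine a dyadic decomposition of $\R^N$ with the elementary fact that, restricted to a single annulus, the kernel $|x-y|^{-q}$ is bounded below by a power of the radius. For $k\in\Z$ let $A_k=\{x\in\R^N:\ 2^k\le|x|<2^{k+1}\}$ and $a_k=\int_{A_k}f$. Since the $A_k$ cover $\R^N$ up to a null set and $|x-y|\le|x|+|y|<2^{k+2}$ whenever $x,y\in A_k$, discarding all off-diagonal blocks gives
$$\int_{\R^N}\int_{\R^N}\frac{f(x)f(y)}{|x-y|^q}\,dx\,dy\ \geq\ \sum_{k\in\Z}\int_{A_k}\int_{A_k}\frac{f(x)f(y)}{|x-y|^q}\,dx\,dy\ \geq\ 2^{-2q}\sum_{k\in\Z}2^{-kq}a_k^2,$$
all terms being read in $[0,+\infty]$, so that no convergence question arises.

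For the right-hand side, on $A_k$ one has $|x|^{-q/2}\leq 2^{-kq/2}$, and a short case analysis (distinguishing $k\geq0$ from $k\leq-1$, using $2^k\le|x|<2^{k+1}$ to compare $|\log|x||$ with $|k|\log2$) yields a constant $c_0=c_0(\a)>0$, independent of $k$, with $(1+|\log|x||)^{\a}\geq c_0(1+|k|)^{\a}$ on $A_k$. Hence
$$\int_{\R^N}\frac{f(x)}{|x|^{q/2}(1+|\log|x||)^{\a}}\,dx\ \leq\ c_0^{-1}\sum_{k\in\Z}\frac{2^{-kq/2}}{(1+|k|)^{\a}}\,a_k\ \leq\ c_0^{-1}\Big(\sum_{k\in\Z}\frac{1}{(1+|k|)^{2\a}}\Big)^{1/2}\Big(\sum_{k\in\Z}2^{-kq}a_k^2\Big)^{1/2},$$
where the last step is Cauchy--Schwarz with the pairing $(1+|k|)^{-\a}\cdot 2^{-kq/2}a_k$. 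The crucial point is that the prefactor $S(\a):=\sum_{k\in\Z}(1+|k|)^{-2\a}$ is finite precisely because $2\a>1$, i.e. because $\a>1/2$; this is the only place the hypothesis is used.

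Squaring the last display and inserting the first one gives
$$\Big(\int_{\R^N}\frac{f(x)}{|x|^{q/2}(1+|\log|x||)^{\a}}\,dx\Big)^2\ \leq\ c_0^{-2}\,S(\a)\sum_{k\in\Z}2^{-kq}a_k^2\ \leq\ c_0^{-2}\,S(\a)\,2^{2q}\int_{\R^N}\int_{\R^N}\frac{f(x)f(y)}{|x-y|^q}\,dx\,dy,$$
which is the assertion with $c=c_0^2/(S(\a)\,2^{2q})$. I expect the only mildly fussy point to be pinning down $c_0$ in the two-sided comparison of $1+|\log|x||$ with $1+|k|$ on $A_k$ and checking it can be chosen uniformly in $k$; the dyadic splitting, the trivial lower bound on the kernel, and the single Cauchy--Schwarz step are routine. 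Theorem \ref{teolb} is then the case $q=N-2>0$ applied to $f=u^2$, and the continuous inclusion $E\subset L^2(\R^3,|x|^{-1/2}(1+|\log|x||)^{-\a}\,dx)$ follows since $D(u^2,u^2)^{1/2}\leq\|u\|_E^2$.
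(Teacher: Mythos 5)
Your proof is correct and follows essentially the same route as the paper: restrict to pairs with comparable radii (you keep only the diagonal dyadic blocks $A_k\times A_k$, while the paper keeps the band $|y|<2|x|<4|y|$), bound the kernel below there by a power of the common radius, and apply a weighted Cauchy--Schwarz with weights $(1+|k|)^{2\alpha}$, which is exactly where $\alpha>1/2$ enters via the convergence of $\sum_{k\in\Z}(1+|k|)^{-2\alpha}$ --- the paper packages this same mechanism as a separate one-dimensional statement (Lemma \ref{lem0}) after passing to spherical averages. Your direct annular version is a clean streamlining and all the steps check out.
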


Observe that Theorem \ref{teolb} follows trivially from the above
result.

\begin{proof}

We begin by estimating:

$$ \int_{\R^N} \int_{\R^N} \frac{f(x)
f(y)}{|x-y|^{q}}\, dx \, dy \geq  \int_{\R^N} \int_{|y|<2 |x|<4
|y|} \frac{f(x) f(y)}{|x|^{q/2}|y|^{q/2}}
\frac{|x|^{q/2}|y|^{q/2}}{|x-y|^{q}}\, dx \, dy \geq $$$$ c(q)
\int_{\R^N} \int_{|y|<2|x|<4 |y|} \frac{f(x)
f(y)}{|x|^{q/2}|y|^{q/2}} \, dx \, dy =
$$$$c(q) \int_{s=0}^{+\infty} \int_{r=s/2}^{2s} \left (
\frac{1}{r^{q/2}} \int_{|x|=r} f(x)\, d\sigma_x \right ) \left
(\frac{1}{s^{q/2}} \int_{|y|=s} f(y)\, d\sigma_y \right )\ dr \,
ds.
 $$

The rest of the proof is based on the following lemma:

\begin{lemma} \label{lem0} Let $\alpha > 1/2$; then, there exists $c=c(\a)>0$ such
that for any $h:(0,+\infty) \to (0,+\infty)$ measurable function,
there holds:
$$ \int_{s=0}^{+\infty} \int_{r=s/2}^{2s} h(r)(1+|\log r|)^{\a} h(s)(1+|\log s|)^{\a} \, dr \, ds
\geq c \left (\int_0^{+\infty} h(r)\, dr \right )^2.
$$
\end{lemma}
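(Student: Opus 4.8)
The plan is to turn the multiplicative constraint $s/2<r<2s$ into an additive one by a logarithmic substitution, and then to reduce the whole inequality to the summability of the series $\sum_n(1+|n|)^{-2\alpha}$, which converges precisely because $\alpha>1/2$.

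First I would set $r=e^t$, $s=e^\sigma$ and $\phi(t)=h(e^t)e^t$, so that $\int_0^{+\infty}h(r)\,dr=\int_{\mathbb R}\phi(t)\,dt$ and $(1+|\log r|)^\alpha=(1+|t|)^\alpha$, while the region $\{s/2<r<2s\}$ becomes $\{|t-\sigma|<\log2\}$. After this change of variables the claim is equivalent to
$$\int_{\mathbb R}\int_{\{|t-\sigma|<\log2\}}\phi(t)(1+|t|)^{\alpha}\,\phi(\sigma)(1+|\sigma|)^{\alpha}\,dt\,d\sigma\ \geq\ c\left(\int_{\mathbb R}\phi(t)\,dt\right)^{2}.$$

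Next I would partition $\mathbb R$ into the consecutive half-open intervals $I_n=[\,n\log2,(n+1)\log2\,)$, $n\in\mathbb Z$, and simply throw away the part of the integral lying outside the diagonal blocks $I_n\times I_n$. Any two points of the same $I_n$ lie at distance strictly less than $\log2$, so $I_n\times I_n$ is contained in the integration region; since the integrand is nonnegative and these blocks are pairwise disjoint, the left-hand side is at least $\sum_n\bigl(\int_{I_n}\phi(t)(1+|t|)^{\alpha}\,dt\bigr)^{2}$. Writing $b_n=\int_{I_n}\phi$ and $d_n=\inf_{t\in I_n}|t|$, so that $1+d_n$ is comparable to $1+|n|$ with constants depending only on $\log 2$, this gives a lower bound of the form $\sum_{n\in\mathbb Z}(1+d_n)^{2\alpha}b_n^{2}$. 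Then Cauchy--Schwarz yields
$$\int_{\mathbb R}\phi=\sum_{n}b_n=\sum_{n}(1+d_n)^{\alpha}b_n\cdot(1+d_n)^{-\alpha}\leq\Bigl(\sum_{n}(1+d_n)^{2\alpha}b_n^{2}\Bigr)^{1/2}\Bigl(\sum_{n}(1+d_n)^{-2\alpha}\Bigr)^{1/2},$$
and the last series is finite because $2\alpha>1$; calling its value $S(\alpha)$, one obtains the displayed inequality with $c=1/S(\alpha)$, and undoing the substitution proves the lemma. If $\int h=+\infty$ the argument still works, since then $\sum_n b_n=+\infty$ forces $\sum_n(1+d_n)^{2\alpha}b_n^2=+\infty$ and the left-hand side is infinite as well.

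I do not expect a serious obstacle here: once the logarithmic change of variables is in place the estimate is soft. The only points needing a little care are the bookkeeping near the origin (relating $1+d_n$ to $1+|n|$ and handling the finitely many small-$|n|$ terms) and the observation that discarding the off-diagonal region is legitimate because every quantity in sight is nonnegative. It is worth noting that $\alpha>1/2$ is exactly the threshold at this step: dropping the logarithmic weight altogether would leave the divergent series $\sum_n 1$ in the Cauchy--Schwarz estimate, which is presumably why a logarithmic factor cannot be removed, as asserted in Remark \ref{remark}.
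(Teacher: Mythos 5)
Your proof is correct and is essentially the paper's argument in logarithmic coordinates: the intervals $I_n=[n\log 2,(n+1)\log 2)$ are exactly the dyadic blocks $[2^n,2^{n+1})$ used in the paper, and the core mechanism (keeping only the diagonal blocks, which lie inside the region $s/2<r<2s$, then applying weighted Cauchy--Schwarz and using the convergence of $\sum_n(1+|n|)^{-2\alpha}$ for $\alpha>1/2$) is identical. The only difference is cosmetic bookkeeping, so there is nothing to add.
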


Indeed, we can define: $$h(r)= \frac{1}{r^{q/2}(1+|\log r|)^{\a} }
\int_{|x|=r} f(x)\, d\sigma_x.$$ We now apply the previous lemma
and finish the proof of Theorem \ref{teo1}.

\medskip

In order to prove the lemma, the following inequality will be of
use: for any two sequences of nonnegative numbers $\{a_n\}$,
$\{b_n\}$ ($n \in \Z$), we have:
$$ \left ( \sum_{n=-\infty}^{+\infty} a_n \right)^2 \leq
\left ( \sum_{n=-\infty}^{+\infty} \frac{1}{b_n} \right ) \left (
\sum_{n=-\infty}^{+\infty} b_n a_n^2. \right )
$$
This inequality is quite well-known, but we show briefly the proof
for convenience of the reader. Below we use the inequality $ab
\leq \frac 1 2 (\gamma a^2 + \gamma^{-1} b^2)$:
$$ \left( \sum_{|n|\leq K} a_n \right)^2 = \sum_{|n| \leq K}
\sum_{|m| \leq K} a_n a_m \leq \frac 1 2 \sum_{|n| \leq K}
\sum_{|m| \leq K} \frac{b_n}{b_m} a_n^2 + \frac{b_m}{b_n} a_m^2 =
$$$$ \sum_{|n| \leq K} \sum_{|m| \leq K} \frac{b_n}{b_m}
a_n^2 =  \left ( \sum_{|n| \leq K} \frac{1}{b_n} \right ) \left
(\sum_{|n| \leq K} b_n a_n^2 \right ).
$$ Take $K \to +\infty$ and we are done.

Let us take $b_n = (1+|n|)^{2\a}$ and $a_n = \dis
\int_{2^{n}}^{2^{n+1}} h(r)\, dr$. Then:
$$ \left (\int_0^{+\infty} h(r)\, dr \right )^2 = \left( \sum_{n=-\infty}^{+\infty} a_n \right)^2 \leq
C(\a) \sum_{n=-\infty}^{+\infty} (1+|n|)^{2\a} a_n^2=$$ $$ C(\a)
\sum_{n=-\infty}^{+\infty}  \left ( (1+|n|)^{\a}
\int_{2^{n}}^{2^{n+1}} h(r) \, dr \right )^2 = $$$$ C(\a)
\sum_{n=-\infty}^{+\infty}  \left ( (1+|n|)^{\a}
\int_{2^{n}}^{2^{n+1}} h(r) \, dr \right ) \left ( (1+|n|)^{\a}
\int_{2^{n}}^{2^{n+1}} h(s) \, ds \right ).
$$

Now, for $n \geq 0$, we have that $r>2^{n}$, hence $n< \log_2 r=
\log_2(e) \log r$, so $1+n \leq C (1+ \log r)$.

\medskip Moreover, if $n<0$, we estimate $2^{n+1}>r \Rightarrow 0
\geq n+1
> \log_2 r =\log_2(e) \log r$. We take absolute values and obtain
that $|n|-1 < \log_2 e |\log r|$, so $1+|n| \leq C (1+ |\log r|)$.
Analogously, $1+|n| \leq C (1+ |\log s|)$ in any integral term
above.

\medskip

Hence:
$$\sum_{n=-\infty}^{+\infty}  \left ( (1+|n|)^{\a}
\int_{2^{n}}^{2^{n+1}} h(r) \, dr \right ) \left ( (1+|n|)^{\a}
\int_{2^{n}}^{2^{n+1}} h(s) \, ds \right ) \leq $$$$ C
\sum_{n=-\infty}^{+\infty} \int_{2^{n}}^{2^{n+1}}
\int_{2^{n}}^{2^{n+1}} h(r)(1+ |\log r|)^{\a} \ h(s)(1+ |\log
s|)^{\a} \, dr \, ds \leq $$$$ C \int_{s=0}^{+\infty}
\int_{s/2}^{2s} h(r)(1+ |\log r|)^{\a} h(s)(1+ |\log s|)^{\a} \,
dr ds.$$

\end{proof}

\begin{remark} \label{remark} Observe that in \eqref{lowerbound} the exponent
$\frac{N-2}{2}$ is the same as in inequality \eqref{peso} for
radial functions. With respect to the logarithmic term, we do not
know whether inequality \eqref{lowerbound} holds for some smaller
value of $\a$ or not. However, we show now that \eqref{lowerbound}
does not hold if $\alpha < \frac{N-2}{2N}$. Indeed, by combining
\eqref{desig} with the thesis of Theorem \ref{teolb}, we conclude
the inclusion:
\begin{equation} \label{eoeoeo} L^{\frac{4N}{N+2}}(\R^N) \subset
L^2(\R^N,\ |x|^{-\frac{N-2}{2}} (1+ \left |\log |x| \right
|)^{-\a}\, dx).\end{equation} Actually one can check that
inclusion directly by using H{\"o}lder inequality if $\alpha >
\frac{N-2}{2N}$. But \eqref{eoeoeo} is false for $\alpha<
\frac{N-2}{2N}$, as one can easily check by using the function:
$$f(x)=
\frac{1}{|x|^{\frac{N+2}{4}} (1+\left | \log |x|\right
|)^{\beta}},$$ where $\frac{N+2}{4N}<\beta \leq
\frac{1-\alpha}{2}$. Hence, the thesis of Theorem \ref{teolb} does
not hold with $\alpha< \frac{N-2}{2N}$.

Observe that the previous argument implies that Lemma \ref{lem0}
is not true for any $\alpha <1/2$. Indeed, if $\alpha < 1/2$, we
can always choose $N$ large enough so that $\alpha<
\frac{N-2}{2N}$. Observe now that Lemma \ref{lem0} would imply
\eqref{lowerbound}, which is false by the previous argument.

\end{remark}

\begin{remark} By using translations, we can obtain the
following generalization of Theorem \ref{teo1}:

Let $q>0$, $\a
>1/2$. Then there exists $c=c(q,\a)>0$ such that for any $f: \R^N \to [0,+\infty)$ measurable and any
$z \in \R^N$, we have:
$$ \int_{\R^N} \int_{\R^N} \frac{f(x)
f(y)}{|x-y|^{q}}\, dx \, dy \geq c \left ( \int_{\R^N}
\frac{f(x)}{|x-z|^{\frac{q}{2}} (1+ \left |\log |x-z| \right
|)^{\a}}\, dx \right )^2.$$

\end{remark}

\section{The radial case}

From now on we will restrict ourselves to the case $N=3$, since
this is the most interesting case in applications. In this section
we are concerned with the radial case, and we will prove Theorems
\ref{teo2}, \ref{teo3} and \ref{teo4}.

First of all, if $u$ is a radial function, $\phi_u$ is also
radial, and can be written as (see \cite{daprile2, M3AS}):
$$ \phi_u(r)= \frac{1}{r} \int_0^{+\infty} u^2(s) s min\{r,s\}\, ds. $$
Therefore, $$\| \phi_u \|_D^2 = \intr \phi_u(|x|) u^2(|x|)\, dx =
4\pi  \int_0^{+\infty}  \int_0^{+\infty} u^2(r) u^2(s) r s
min\{r,s\} \, dr \, ds.$$

\begin{proof}[{\bf Proof of Theorem \ref{teo2}}]

\medskip

Take any $\gamma > \frac 1 2$, and define $V(x)=
(1+|x|)^{-\gamma}$. Following \cite{swwillem, swwillem2}, let us
define:
$$H_r^1(\R^3,V):= D_r^{1,2}(\R^3) \cap L^2(\R^3,\ V(x)\, dx).$$
In that space we consider the norm $\|u\|^2_{H_V}= \int_{\R^3}
|\nabla u(x)|^2 + V(x) u^2(x) \, dx$.

By Theorem \ref{teolb} we obtain $E_r(\R^3) \subset H_r^1(\R^3,V)$.
The spaces of radial functions $H_r^1(\R^N,V)$ have been studied in
\cite{swwillem, swwillem2}: there it is proved that
$$H_r^1(\R^3,V) \subset L^p(\R^3) \ \mbox{for } p\in
\left [ \frac{2(4+\gamma)}{4-\gamma},6 \right ]$$ with continuous
inclusion. Observe now that fixed $p > \frac{18}{7}$, we can take
$\gamma
>\frac{1}{2}$ such that $p=\frac{2(4+\gamma)}{4-\gamma}$.

\medskip We can now prove the compact inclusions as usually, combining the
continuous inclusions with asymptotic estimates. Fix $p \in
(\frac{18}{7},6)$ and assume, without loss of generality, that
$u_n \rightharpoonup 0$ in $E$. In particular, $u_n
\rightharpoonup 0$ in $D^{1,2}$ and $u_n \rightharpoonup 0$ in
$L^q$ for every $q \in (\frac{18}{7},6]$.

We have the asymptotic estimate $|u_n(x)| \leq C \|u_n\|_{D}
|x|^{-1/2} \leq C'|x|^{-1/2}$ (actually, a better estimate can be
given, see \cite{swwillem, swwillem2}). Choose $\delta>0$ such
that $p-\delta > \frac{18}{7}$. We have:
$$ \int_{B(0,R)^c} u_n^p= \int_{B(0,R)^c} u_n^{\delta}
u_n^{p-\delta}\leq C R^{-\delta/2} \int_{\R^N} u_n^{p-\delta} \leq
C' R^{-\delta/2}.$$ So, given $\e>0$, we can choose $R>0$ such
that $ \int_{B(0,R)^c} u_n^p < \e$ for every $n \in \N$. On the
other hand, $u_n \to 0$ in $L^p(B(0,R))$, and this finishes the
proof of compactness.

\medskip By using dilatations, it is easy to see that $E$ is not included in $L^{p}(\R^3)$ for $p>6$.
In order to deal with the case $p < 18/7$, we prove the following

{\bf Claim:} Given $p<18/7$, $M>0$, $T>0$, there exists a function
 $u \in E$ with compact support such that $\| u\|_E \leq 1$ , $u=0$ in $B(0,M)$, $\intr |u|^{p}
>T$.

\medskip Indeed, for any $\e \in (0,1)$ define $R=\e^{-8/7}$, $S=\e^{-2/7}$
and:
$$ u_{\e}(r)= \left \{ \begin{array}{ll} 0  & \mbox{ if } |r-R| \geq
S, \\ \e \ \frac{S-|r-R|}{S} & \mbox{ if } |r-R| <S.
\end{array} \right. $$

We compute the norm of $u_{\e}$ in $E$:

$$ \int_0^{+\infty} u'(r)^2 r^2\, dr \leq   \int_{R-S}^{R+S}
\frac{\e^2}{S^2} (R+S)^2\, dr = 2 \frac{\e^2}{S} (2R)^2 \leq 8,$$

$$ \int_0^{+\infty}  \int_0^{+\infty} u^2(r) u^2(s) r s min\{r,s\} \,
dr \, ds \leq $$$$\int_{R-S}^{R+S} \int_{R-S}^{R+S} \e^4 (R+S)^3\,
dr \, ds \leq 4 S^2 \e^4  (2R)^3 \leq 32.$$

Moreover:
$$ \int_0^{+\infty} u(r)^p r^2 \, dr \geq \int_{R-S/2}^{R+S/2}
\left (\frac{\e}{2}\right )^p (R-S/2)^2 \, dr \geq $$$$S \left
(\frac{\e}{2}\right )^p (R/2)^2= \frac{1}{2^{p+2}} \e^p S R^2 =
\frac{1}{2^{p+2}} \e ^{p-\frac{18}{7}}.$$

Hence, the claim follows by taking $\e$ small enough (and dividing
by a convenient constant).

\bigskip Observe that the above claim readily implies that there
is no continuous inclusion $E \subset L^p(\R^3)$ for $p <
\frac{18}{7}$. We now show briefly that there is no inclusion at
all, continuous or not. By the above claim, we can construct a
sequence $u_n$ such that $\|u_n\|_E \leq 2^{-n}$, $\intr
|u_n(x)|^p\, dx =1$ and with disjoint support. Observe that $v=
\sum_{n=1}^{+\infty} u_n \in E$ and $\|v\|_E \leq 1$. Moreover,
since $u_n$ have disjoint support, we have:
$$ \intr |v(x)|^{p} \, dx = \sum_{n=1}^{+\infty} \intr |u_n(x)|^{p} \, dx = +\infty. $$

\end{proof}

\begin{remark} We conjecture that $E$ is not included in
$L^{18/7}(\R^3)$. \end{remark}

\begin{proof}[{\bf Proof of Theorem \ref{teo3}}]

\medskip

By Theorem \ref{teo2}, $J$ is well-defined, and it can be checked
that $J$ is $C^1$ and that:
$$ J'(u)(v)= \intr \nabla u \cdot \nabla v\, dx + \intr \intr \frac{u^2(x) u(y)v(y)}{|x-y|}
\, dx \, dy -\intr |u|^{p-2}uv. $$ Take $u\in E$, and define:
$$ M(u)= \intr |\nabla u|^2 \, dx + \int_{\R^3} \int_{\R^3} \frac{u^2(x)
u^2(y)}{|x-y|}\, dx \, dy. $$

Define $\lambda= M(u)^{-1/3}$, and $v(x)= \lambda^2 u(\lambda x)$.
Observe that: $ M(v)= \lambda^3 M(u) =1$, so, $\|v\|_E \leq
\sqrt{2}$. By Theorem \ref{teo2}, there exists $C>0$ such that
$\intr |v|^p < C$. Now observe that $\intr |u|^p =
\lambda^{3-2p}\intr |v|^p$. Summing up:
$$ J(u) \geq  \frac 1 4 M(u) - \frac{1}{p} \intr |u|^p\, dx \geq
\frac 1 4 M(u) - \frac C p M(u)^{\frac{2p-3}{3}} \geq \frac 1 8
M(u) -C'.
$$
For last inequality just observe that the function $g(s)= \frac s
8 - \frac C p s^{\frac{2p-3}{3}}$ is bounded below for $p<3$.
Hence, $J$ is coercive.

By using the compactness of the inclusion $E \subset L^p(\R^3)$,
it is easy to show that $I$ is weakly lower semicontinuous.

In order to finish the proof of Theorem \ref{teo3} we just need to
show that $\min J <0$. For that, fix $u \in E_r$, and define again
$v_{\la}(x)= \lambda^2 u(\lambda x)$. Hence:
$$ J(v_{\la})= \frac{\la^3}{2} \intr |\nabla u|^2 \, dx + \frac{\la^3}{4} \int_{\R^3} \int_{\R^3} \frac{u^2(x)
u^2(y)}{|x-y|}\, dx \, dy -\frac{\la^{2p-3}}{p} \intr |u|^p\, dx.
$$
So, for $\lambda$ small, $J(v_{\la})$ takes negative values.

\end{proof}

\begin{proof}[{\bf Proof of Theorem \ref{teo4}}]

By making the change of variables $\e =
\lambda^{\frac{p-2}{4(3-p)}}$, $v(x)= \e^{\frac{2}{p-2}}u(\e x)$,
problem \eqref{eq11} is equivalent to:
\begin{equation}\label{eq32} - \Delta v + \e^2 v + \left ( v^2 \star \frac{1}{|x|} \right )
v=|v|^{p-2}v. \end{equation} Let us define the associated
functional $J_{\e}: H^1_r(\R^3) \to \R$,
$$ J_{\e}(v)= \frac 1 2 \int_{\R^3} \left ( |\nabla v|^2 + \e^2 |v|^2 \right ) dx +
\frac{1}{4} \intr \intr \frac{v^2(x) v^2(y)}{|x-y|}\, dx \, dy -
\frac{1}{p} \intr |v|^p\,dx. $$

Observe that $J_{\e}(v)= \e^{\frac{6-p}{p-2}}I_{\la}(u)$.

Take $u_{\la}$ minimizer of $I_{\la}$, and define $v_{\e}$ through
the above change of variables: clearly, $v_{\e}$ is a minimizer
for $J_{\e}$.

\medskip

First, we claim that $\min J_{\e} \to \min J$ as $\e \to 0$.
First, observe that $\min J_{\e} > \min J$. Take any $\delta >0$
and choose $v \in E$ such that $J(v) = \min J$. Since
$C_{0,r}^{\infty}(\R^3)$ is dense in $E_r$ (see Proposition
\ref{app}), we can choose $\rho \in C_{0,r}^{\infty}(\R^3)$ such
that $J(\rho)< \min J + \delta$. Hence, we can choose $\e_0>0$
such that for $\e\in (0,\e_0)$, $J_{\e}(\rho) < \min J + 2
\delta$. Since $\delta$ is arbitrary, we conclude the proof of the
claim.

\medskip

Hence, $\min J \leftarrow \min J_{\e}=J_{\e}(v_{\e}) \geq
J(v_{\e}) \geq \min J$, so $v_{\e}$ is minimizing for $J$.
Suppose, reasoning by contradiction, that there exists $c_0>0$ and
$\e_n \to 0$ such that $d (v_{\e_n},K)>c_0$. Since $J$ is coercive
(Theorem \ref{teo3}), we have that $v_{\e_n}$ is bounded in $E$,
and hence it converges weakly to some $v \in E$. Moreover, $J$ is
weakly lower semicontinuous, so $v \in K$.

Recall that $J(v_{\e_n}) \to J(v)$, and, by Theorem \ref{teo1},
$\intr |v_n|^p \to \intr |v|^p$. From these facts we deduce that
$\|v_{\e_n}\|_E \to \|v\|_E$. Since $E$ is uniformly convex, we
conclude that $v_{\e_n} \to v$ strongly in $E$ (see \cite{brezis},
Proposition III.30). This is a contradiction with $d(v_{\e_n}, K)
>c_0>0$.

\end{proof}

\section{The nonradial case}

In this final section we consider the nonradial case, and we will
prove Theorems \ref{teo5}, \ref{teo6}, \ref{teo7}.

We start with some estimates that will be of use later on. Given
any $u \in C_0^{\infty}(\R^3)$ with support included in $B(0,M)$,
$e \in \R^3$ with $|e|=1$, and $N\in \N$, we define:
\begin{equation} \label{uN} u_N(x)= \sum_{i=1}^N u(x+i N^2e).
\end{equation}

Observe that $u_N$ is a sum of translations of $u$, and if
$N^2>2M$ the summands have disjoint support. In such case we have:

\begin{equation} \label{c1} \intr |\nabla u_N|^2 \, dx = N \intr |\nabla u|^2 \,
dx,\end{equation} \begin{equation} \label{c2} \intr |u_N|^p \, dx
= N \intr |u|^p \, dx,
\end{equation}
$$ \intr \intr \frac{u_N^2(x) u_N^2(y)}{|x-y|}\, dx \, dy =
\sum_{i,j=1}^N \intr \intr \frac{u^2(x+iN^2e)
u^2(y+jN^2e)}{|x-y|}\, dx \, dy = $$$$N \intr \intr \frac{u^2(x)
u^2(y)}{|x-y|}\, dx \, dy + \sum_{i \neq j}^N \intr \intr
\frac{u^2(x+iN^2e) u_N^2(y+jN^2e)}{|x-y|}\, dx \, dy.$$

Now we compute:

$$ \sum_{i \neq j}^N \intr \intr
\frac{u^2(x+iN^2e) u^2(y+jN^2e)}{|x-y|}\, dx \, dy = $$$$\sum_{i
\neq j}^N \int_{B(0,M)} \int_{B(0,M)} \frac{u^2(x) u^2(y)}{|x-y +
(j-i)N^2e|}\, dx \, dy\leq
$$$$\sum_{i \neq j}^N \int_{B(0,M)} \int_{B(0,M)} \frac{u^2(x)
u^2(y)}{N^2-2M}\, dx \, dy =\frac{N^2-N}{N^2-2M} \left (\intr
u^2(x) \, dx \right )^2.$$

So, we get:

\begin{equation} \label{c3} \left |
\intr \intr \frac{u_N^2(x) u_N^2(y)}{|x-y|}\, dx \, dy - N \intr
\intr \frac{u^2(x) u^2(y)}{|x-y|}\, dx \, dy \right | \leq C,
\end{equation}
where $C>0$ is a constant depending on the original function $u$
and is independent of $N$.

\medskip \begin{proof}[{\bf Proof of Theorem \ref{teo5}}]

First, we show that $E \subset L^3(\R^3)$. Actually, this is
well-known from \cite{lions}. Recall that $-\Delta \phi = u^2$,
and $\phi \in D^{1,2}(\R^3)$. By multiplying by $|u|$ and
integrating, we obtain: \begin{equation} \label{lions} \intr |u|^3
 = \intr \langle \nabla \phi , \nabla |u|
\rangle \leq \frac 1 2 \intr |\nabla u|^2 + |\nabla \phi|^2\, dx.
\end{equation}

Recall now that:
$$ \int_{\R^3} |\nabla \phi_u(x)|^2\, dx =  \int_{\R^3}
\int_{\R^3} \frac{u^2(x) u^2(y)}{4 \pi |x-y|}\, dx \, dy.$$

This implies that $E \subset L^3(\R^3)$. Moreover, $E \subset
D^{1,2}(\R^3) \subset L^6(\R^3)$ by Sobolev embedding. By
interpolation we conclude that $E \subset L^p(\R^3)$ for all $p
\in [3,6]$.

Let us now show that $E$ is not included in any $L^p$ space for
$p<3$. Fix $u \in C_0^{\infty}(\R^3)$; for $N \in \N$ define $u_N$
as in \eqref{uN}. Take also $\lambda_N = N^{-1/3}$, and define
$v_N= \lambda_N^2 u_N(\lambda_N x)$. By using \eqref{c1},
\eqref{c2}, \eqref{c3}, we obtain:

$$ \intr |\nabla v_N|^2 \, dx = \lambda_N^3 \intr |\nabla u_N|^2=
C_1.$$
$$ \intr \intr \frac{v_N^2(x) v_N^2(y)}{|x-y|}\, dx \, dy =
\lambda_N^3 \intr \intr \frac{u_N^2(x) u_N^2(y)}{|x-y|}\, dx \,
dy$$$$ \leq \la_N^3 (C_2 N + C_3) \leq C_4.$$

$$ \intr |v_N|^p \, dx = \lambda_N^{2p-3} \intr |u_N|^p \, dx =
C_5\lambda_N^{2p-3}N.$$

Above, $C_k$ are positive constants that depend upon $u$, but are
independent of $N$. So, $\{v_N\}$ is a bounded sequence in $E$
such that $\int |v_N|^p\, dx \to +\infty$ if $p<3$.

This already implies that there is no continuous inclusion from
$E$ in $L^p(\R^3)$. Now, we can argue as in the proof of Theorem
\ref{teo2}, to conclude that $E$ is not included in $L^p$.

\end{proof}

We now turn our attention to Theorem \ref{teo6}. First of all, we
remind a result of \cite{JFA} (Theorem 4.3):

\begin{proposition} \label{ta}For any $\la>0$ and $p\in (2,3)$, $I_{\la}|_{H^1_r(\R^3)}$ is
coercive and weak lower semicontinuous. In particular, it has a
minimum. \end{proposition}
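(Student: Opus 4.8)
Two things have to be shown: that $I_{\la}$ is coercive on $H^1_r(\R^3)$ and that it is sequentially weakly lower semicontinuous there; the existence of a minimum is then the usual direct method (a minimizing sequence is bounded by coercivity, a weakly convergent subsequence is extracted, and lower semicontinuity concludes).

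Weak lower semicontinuity is the softer part. If $u_n\rightharpoonup u$ in $H^1_r$, the quadratic part $\frac12\intr(|\nabla u_n|^2+u_n^2)$ is weakly lower semicontinuous, being (half of) a squared norm. For the remaining two terms I would invoke the compact embedding $H^1_r(\R^3)\hookrightarrow L^q(\R^3)$ for $q\in(2,6)$: then $u_n\to u$ strongly in $L^p$ and in $L^{12/5}$, so $\intr|u_n|^p\to\intr|u|^p$; and, since $D(v,v)^{1/2}$ is a seminorm and $D(v,v)\le C\|v\|_{L^{6/5}}^2$ by Hardy--Littlewood--Sobolev (cf.\ \eqref{desig}),
$$\Big|D(u_n^2,u_n^2)^{1/2}-D(u^2,u^2)^{1/2}\Big|\le D\big(u_n^2-u^2,\,u_n^2-u^2\big)^{1/2}\le C\,\|u_n-u\|_{L^{12/5}}\,\|u_n+u\|_{L^{12/5}}\to 0,$$
so the Coulomb term is in fact weakly continuous. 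Hence $I_{\la}(u)\le\liminf_n I_{\la}(u_n)$.

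Coercivity is the heart of the matter, and it is here that $p<3$ must be used. The plan is to establish a Gagliardo--Nirenberg-type inequality in which the Coulomb energy participates: there are exponents $\a,\b,\g\ge0$ with
$$\a+\b+\g<1\qquad\text{and}\qquad\intr|u|^p\,dx\ \le\ C\big(\|\nabla u\|_{L^2}^2\big)^{\a}\big(\|u\|_{L^2}^2\big)^{\b}\big(D(u^2,u^2)\big)^{\g}\quad\text{for all }u\in H^1_r(\R^3)$$
(more honestly, a finite sum of such monomials will do). The two scalings $u\mapsto u(\cdot/t)$ and $u\mapsto\tau u$ force $\a+3\b+5\g=3$ and $2\a+2\b+4\g=p$, hence $\a+\b+2\g=\tfrac p2$, so that $\a+\b+\g<1$ amounts to $\g>\tfrac p2-1$, and this can be arranged together with $\a,\b,\g\ge0$ exactly when $p\in(2,3)$ (the binding constraint being $p<3$). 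Granting the inequality, Young's inequality gives, for every $\e>0$, $\intr|u|^p\le\e\big(\|u\|_{H^1}^2+D(u^2,u^2)\big)+C_{\e}$, whence
$$I_{\la}(u)\ \ge\ \Big(\tfrac12-\tfrac\e p\Big)\|u\|_{H^1}^2+\Big(\tfrac\la4-\tfrac\e p\Big)D(u^2,u^2)-\tfrac{C_{\e}}{p},$$
which tends to $+\infty$ as $\|u\|_{H^1}\to\infty$ once $\e$ is small enough.

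There remains the interpolation inequality, which I would prove after splitting $\intr|u|^p=\int_{B(0,1)}+\int_{|x|\ge1}$. The inner piece is easy: since $p<3$ and the ball is bounded, $\|u\|_{L^p(B(0,1))}\le C\|u\|_{L^3(B(0,1))}$, while \eqref{lions} gives $\int_{B(0,1)}|u|^3\le C\big(\|\nabla u\|_{L^2}^2+D(u^2,u^2)\big)$, so $\int_{B(0,1)}|u|^p\le C\big(\|\nabla u\|_{L^2}^2+D(u^2,u^2)\big)^{p/3}$ with $p/3<1$. For the exterior piece I would use the radial structure: the Strauss decay $|u(x)|\le C|x|^{-1}\|u\|_{L^2}^{1/2}\|\nabla u\|_{L^2}^{1/2}$, together with Theorem~\ref{teolb} (taking its exponent equal to $1$), which bounds $\intr u^2|x|^{-1/2}(1+|\log|x||)^{-1}\,dx$ by $C\,D(u^2,u^2)^{1/2}$. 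Writing $|u|^p=|u|^{p-2}\cdot\big(u^2|x|^{-1/2}(1+|\log|x||)^{-1}\big)\cdot|x|^{1/2}(1+|\log|x||)$ on $\{|x|\ge1\}$, and using that $|x|^{(5-2p)/2}(1+|\log|x||)$ is bounded there when $p>\tfrac52$, one gets $\int_{|x|\ge1}|u|^p\le C\big(\|u\|_{L^2}\|\nabla u\|_{L^2}\big)^{(p-2)/2}D(u^2,u^2)^{1/2}$, whose combined homogeneity $\tfrac{p-2}{2}+\tfrac12=\tfrac{p-1}{2}$ is again $<1$. This settles the case $p\in(\tfrac52,3)$. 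I expect the main obstacle to be the remaining range $p\in(2,\tfrac52]$: there the weight $|x|^{(5-2p)/2}(1+|\log|x||)$ is no longer bounded on $\{|x|\ge1\}$, and one has to decompose the exterior region into dyadic annuli and control the individual annular contributions to the Coulomb energy (this is carried out in \cite{JFA}, Theorem~4.3). Once the interpolation inequality is available, coercivity --- and with it the existence of the minimum --- follows as explained above.
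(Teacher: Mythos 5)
The paper offers no proof of this proposition at all: it is quoted verbatim from \cite{JFA}, Theorem 4.3, so there is no internal argument to compare yours against. Judged on its own terms, your proposal splits into a part that is complete and a part that is not. The weak lower semicontinuity step is fine: the compact embedding $H^1_r(\R^3)\hookrightarrow L^q(\R^3)$ for $q\in(2,6)$ gives strong convergence in $L^p$ and in $L^{12/5}$, and the reverse triangle inequality for the seminorm $D(\cdot,\cdot)^{1/2}$ combined with Hardy--Littlewood--Sobolev (as in \eqref{desig}) shows the Coulomb term is in fact weakly continuous along such sequences. The coercivity step, reduced to an interpolation inequality of total degree strictly less than one in $\|\nabla u\|_{L^2}^2$, $\|u\|_{L^2}^2$ and $D(u^2,u^2)$, is also sound in conception; your treatment of the unit ball via \eqref{lions} and of the exterior via the Strauss decay together with Theorem~\ref{teolb} (with $\a=1$) is correct, and the resulting degrees $p/3$ and $(p-1)/2$ are indeed below one. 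But, as you yourself note, the exterior estimate requires $|x|^{(5-2p)/2}(1+|\log|x||)$ to be bounded on $\{|x|\ge1\}$, i.e. $p>5/2$.

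For $p\in(2,5/2]$ the gap is genuine. The exterior estimate fails as written, and the obvious repair --- interpolating the exterior integral against $\|u\|_{L^2}^2$ and the Strauss bound alone --- returns total degree $p/2>1$, which is useless for coercivity; so the Coulomb energy must enter, and your weighted $L^2$ bound from Theorem~\ref{teolb} does not carry enough decay at infinity to let it enter when $p\le 5/2$. Gesturing at ``dyadic annuli as in \cite{JFA}'' is a citation, not an argument: the mechanism in \cite{JFA} is of a different nature, resting on the radial lower bound $\phi_u(|x|)\ge \frac{c}{|x|}\int_{B(0,|x|)}u^2\,dy$, which couples the mass in an inner ball to the tail of $u^2/|x|$, and it is not a routine extension of your computation. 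Since the proposition is invoked in the paper for the whole range $p\in(2,3)$ (notably in part (1) of Theorem~\ref{teo7}), the missing range matters. That said, your argument for $p\in(5/2,3)$ --- a range containing $p=8/3$ and all of $(18/7,3)$, where the paper's new results live --- is a legitimate, self-contained alternative to \cite{JFA} that exploits the paper's own Theorem~\ref{teolb}; it would be worth recording as such, with the restriction on $p$ stated honestly.
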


As we see, Theorem \ref{teo6} is in contrast with the above
result.

To start with, let us consider the map $m: [0, +\infty) \to
[-\infty, +\infty)$, $m(\la)= \inf I_{\la}$. It is easy to check
that $m$ is nondecreasing and upper semicontinuous, and
$m(0)=-\infty$.

Moreover, $m_{\la} \leq I_{\la}(0)=0$ for any $\la \in \R$.
Actually, in \cite{JFA} it is proved that $m_{\la}=0$ for $\la$
large and $p \in (2,3)$. Let us reproduce the proof here, for the
sake of completeness:

$$ I_{\la}(u)= \frac 1 2 \int_{\R^3} \left ( |\nabla u|^2 + u^2  \right ) dx +
\frac{\la}{4} \intr \intr \frac{u^2(x) u^2(y)}{|x-y|}\, dx \, dy -
\frac{1}{p} \intr |u|^p\,dx =$$ $$  \frac 1 2 \int_{\R^3} \left
(|\nabla u|^2 + u^2 \right )  dx + \pi \la \intr |\nabla
\phi_u|^2\, dx - \frac{1}{p} \intr |u|^p\,dx. $$

Then, if $\lambda \pi \geq \frac 1 2$, we can use inequality
\eqref{lions} to conclude:

$$ I_{\la}(u) \geq  \int_{\R^3}
\left ( \frac 1 2 u^2 +  |u|^3  - \frac{1}{p}  |u|^p \right )dx
\geq 0.$$

\begin{proof}[{\bf Proof of Theorem \ref{teo6}}]

\medskip With all these preliminaries, Theorem \ref{teo6} follows
from the following

\medskip {\bf Claim: } If $m_{\la}<0$, then $m_{\la}=-\infty$.

\medskip

If $m_{\la}<0$, we can use density of $C^{\infty}_0(\R^3)$ in
$H^1(\R^3)$ to find $u \in C_0^{\infty}(\R^3)$ such that
$I_{\la}(u)<0$. Given such function $u$, define $u_N$ as in the
beginning of the section. By \eqref{c1}, \eqref{c2}, \eqref{c3},
$$ I_{\la}(u_N)\leq  N I_{\la}(u) + C,$$
where $C>0$ is independent of $N$. So, we conclude that $\lim_{N
\to +\infty} I_{\la}(u_N) = -\infty$.

\end{proof}

As we have seen, Theorems \ref{teo5} and \ref{teo6} make clear the
differences between the radial and nonradial cases. This
phenomenon is due to the nonlocal term given by the Coulomb
energy; observe that this term increases when we make Schwartz
rearrangements.

In order to obtain more consequences from this, let us consider
the problem:

\begin{equation}\label{eqbola2}  \begin{array}{ll} -\Delta u + u + \lambda \left ( u^2 \star \frac{1}{|x|} \right )
u=|u|^{p-2}u, & \mbox{ in }B(0,R) \\ \  u(x)=0  & \mbox{ in }
\partial B(0,R). \end{array} \end{equation}

As always, $p\in (2,3)$. The associated energy functional is
nothing but $I_{\la}|_{H_0^1(B(0,R))}$ (in the radial case,
$I_{\la}|_{H_{0,r}^1(B(0,R))}$). Define:

$$ m(R, \la)= \inf  I_{\la}|_{H_0^1(B(0,R))}, \ \
\bar{m}(R, \la)= \inf  I_{\la}|_{H_{0,r}^1(B(0,R))}. $$

\begin{lemma} The infima that define $m$ and $\bar{m}$ are achieved.
\end{lemma}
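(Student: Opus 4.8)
The plan is to prove existence of minimizers for both $m(R,\la)$ and $\bar m(R,\la)$ by the direct method: take a minimizing sequence, extract a weakly convergent subsequence in $H^1_0(B(0,R))$, and check that the energy functional $I_\la|_{H^1_0(B(0,R))}$ is coercive and weakly lower semicontinuous on this space. Since $B(0,R)$ is bounded, the analysis is considerably easier than on all of $\R^3$: the embedding $H^1_0(B(0,R))\hookrightarrow L^q(B(0,R))$ is compact for every $q\in[1,6)$, so in particular for the exponent $p\in(2,3)$ appearing in the power term and for $q=12/5$, which controls the Coulomb term via Hardy–Littlewood–Sobolev (inequality \eqref{desig} with $N=3$ gives the Coulomb energy bounded by $C\|u\|_{L^{12/5}}^4$).

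First I would establish coercivity. Since $\intr\intr\frac{u^2(x)u^2(y)}{|x-y|}\,dx\,dy\ge 0$, one has
$$ I_\la(u)\ \ge\ \frac12\int_{B(0,R)}\left(|\nabla u|^2+u^2\right)dx-\frac1p\int_{B(0,R)}|u|^p\,dx\ \ge\ \frac12\|u\|_{H^1}^2-C\|u\|_{H^1}^p, $$
using the Sobolev embedding $H^1_0(B(0,R))\hookrightarrow L^p(B(0,R))$; since $p<3<+\infty$ and in particular $p>2$, the function $t\mapsto \frac12 t^2-Ct^p$ would be a problem only if $p<2$, but here $2<p<3$ forces it the other way — wait, for $p>2$ the term $-Ct^p$ dominates; so instead I would keep the quartic Coulomb term. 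Actually the cleanest route is to use that on the bounded domain the Coulomb energy is \emph{equivalent} up to constants in the relevant sense: from $\|u\|_{L^{12/5}}\le C_R\|u\|_{H^1_0}$ one does not get a lower bound, so coercivity must come from the interplay $\frac12\|u\|_{H^1}^2+\frac\la4 D(u^2,u^2)-\frac1p\|u\|_p^p$ exactly as in the proof of Theorem \ref{teo3}: interpolating $\|u\|_p$ between $\|u\|_{L^{12/5}}$ (controlled by $D(u^2,u^2)^{1/4}$) and $\|u\|_{L^6}$ (controlled by $\|\nabla u\|_2$) yields $I_\la(u)\ge \eta(\|\nabla u\|_2^2+D(u^2,u^2))-C'$ for some $\eta>0$, whence coercivity in $H^1_0(B(0,R))$ after adding the $\|u\|_2^2$ term. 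I expect this interpolation bookkeeping to be the one genuinely technical point, though it is entirely routine.

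Next I would verify weak lower semicontinuity. Given $u_n\rightharpoonup u$ in $H^1_0(B(0,R))$: the Dirichlet term $\int|\nabla u_n|^2$ is weakly lower semicontinuous, the $L^2$ term passes to the limit by compact embedding, the power term $\int|u_n|^p\to\int|u|^p$ by compact embedding into $L^p$, and for the Coulomb term one uses compactness of $H^1_0(B(0,R))\hookrightarrow L^{12/5}(B(0,R))$ together with the bilinearity estimate $|D(u_n^2,u_n^2)-D(u^2,u^2)|\le D(u_n^2+u^2,(u_n-u)^2)^{1/2}\cdot(\cdots)$ — precisely the computation already carried out in the proof of Proposition \ref{app} — which shows $D(u_n^2,u_n^2)\to D(u^2,u^2)$ (in fact one only needs $\liminf$). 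Hence $I_\la$ is weakly sequentially lower semicontinuous on $H^1_0(B(0,R))$, a minimizing sequence is bounded by coercivity, its weak limit $u$ satisfies $I_\la(u)\le\liminf I_\la(u_n)=m(R,\la)$, so $u$ is a minimizer. For $\bar m(R,\la)$ the identical argument applies within the closed subspace $H^1_{0,r}(B(0,R))$, which is weakly closed, so the weak limit is again radial. The main obstacle, such as it is, is simply assembling the coercivity estimate via interpolation; weak lower semicontinuity of the Coulomb term is the only place requiring care, and it is dispatched by the estimate already proved for Proposition \ref{app}.
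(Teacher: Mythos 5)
Your overall strategy (direct method: coercivity plus weak lower semicontinuity, with the compact embeddings of $H^1_0(B(0,R))$ handling the power and Coulomb terms in the limit) matches the paper, and your treatment of weak lower semicontinuity is fine. The genuine gap is in the coercivity step, and you half-noticed it yourself when you observed that simply dropping the Coulomb term cannot work for $p>2$. The fix you propose does not work: you claim that $\|u\|_{L^{12/5}}$ is ``controlled by $D(u^2,u^2)^{1/4}$'', but Hardy--Littlewood--Sobolev (inequality \eqref{desig}) goes the other way, $D(u^2,u^2)\le C\|u\|_{L^{12/5}}^4$, and there is no reverse bound --- the scarcity of lower bounds for the Coulomb energy is precisely the point of Theorem \ref{teolb}. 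Likewise, interpolating ``exactly as in the proof of Theorem \ref{teo3}'' secretly imports the embedding $E_r\subset L^p$, which requires both radial symmetry and $p>18/7$, whereas here you must handle nonradial $u$ and every $p\in(2,3)$ (and Theorem \ref{teo5} says $E\not\subset L^p$ for $p<3$). A quick sanity check exposes the problem: your proposed estimate $I_\lambda(u)\ge\eta\bigl(\|\nabla u\|_2^2+D(u^2,u^2)\bigr)-C'$ involves no quantity depending on $R$, so if it were true it would hold on all of $H^1(\R^3)$ and would contradict Theorem \ref{teo6}, which asserts $\inf I_\lambda=-\infty$ for $\lambda<\lambda_0$.

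The boundedness of the domain must therefore enter the coercivity argument itself, not only the compactness step, and this is exactly what the paper does. The usable lower bound on the Coulomb term is Lions' inequality \eqref{lions}: writing $\phi=\phi_u$ and testing $-\Delta\phi=u^2$ against $|u|$ gives, after Young's inequality,
$$ c_{\lambda}\int_{B(0,R)}|u|^3=c_{\lambda}\int_{B(0,R)}\langle\nabla\phi,\nabla|u|\rangle\le\frac14\int_{B(0,R)}|\nabla u|^2+\frac{\lambda}{4}\intr|\nabla\phi|^2,\qquad c_{\lambda}=\frac{\sqrt{\lambda}}{2}, $$
so that $I_{\lambda}(u)\ge\int_{B(0,R)}\bigl(\frac14|\nabla u|^2+\frac12 u^2+c_{\lambda}|u|^3-\frac1p|u|^p\bigr)\,dx$. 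Since $2<p<3$, the function $t\mapsto\frac12 t^2+c_{\lambda}t^3-\frac1p t^p$ is bounded below on $[0,+\infty)$ by some $-C_0(\lambda)$, and integrating this pointwise bound over the \emph{bounded} set $B(0,R)$ yields $I_{\lambda}(u)\ge\frac14\int_{B(0,R)}|\nabla u|^2-C_0|B(0,R)|$. This is where the finiteness of $|B(0,R)|$ is used and why the same bound fails on $\R^3$. With this replacement for your interpolation step, the rest of your argument goes through.
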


\begin{proof}

The proof follows from \cite{JFA}. Indeed, as in \eqref{lions},

\begin{equation} \label{eq34} c_{\lambda} \int_{B(0,R)} |u|^3 =
c_{\lambda} \int_{B(0,R)} \langle \nabla \phi , \nabla |u| \rangle
\leq \frac{1}{4} \int_{B(0,R)}  |\nabla u|^2 + \frac{\lambda}{4}
\intr |\nabla \phi|^2\, ,
\end{equation} where $c_{\lambda}= \frac{\sqrt{\lambda}}{2}>0$.
Let us point out that $\phi= \frac{1}{4\pi |x|} \star u^2$, so
$-\Delta \phi = u^2$ in $\R^N$, so $\phi$ is not equal to zero on
the boundary.

We have:
$$I_{\la}(u) \geq \int_{B(0,R)} \left ( \frac{1}{4}  |\nabla u|^2+ \frac{1}{2} u^2 +
c_{\lambda}|u|^3- \frac{1}{p+1}|u|^{p+1}\right) dx \geq
\int_{B(0,R)} \frac{1}{4} |\nabla u|^2\, dx -C. $$ Therefore,
$I_{\la}|_{H_0^1(B(0,R))}$ is coercive. It is not difficult to
check that it is also weak lower semicontinuous, so the existence
of a minimum holds. The same arguments work in the radial case.

%
%
%
%
%

\end{proof}

\begin{proof}[{\bf Proof of Theorem \ref{teo7}}]

$ $

\begin{enumerate}

\item[(1)] Suppose that $p \in (2,3)$, $\la  \in (0,\la_0)$, where
$\la_0$ is given in Theorem \ref{teo6}.

\end{enumerate}

It suffices to show that for $R$ large, $m(R,\la) <
\bar{m}(R,\la)$. Have in mind Theorem \ref{teo6} and Proposition
\ref{ta}, so:
$$ \lim_{R \to +\infty} m(R,\la)= \inf I_{\la} = -\infty,\ \
\lim_{R \to +\infty} \bar{m}(R,\la)= \inf I_{\la}|_{H^1_r(\R^3)}>
-\infty.$$ Then, for $R$ large, $m(R,\la) < \bar{m}(R,\la)$.

\begin{enumerate}

\item[(2)] We now assume $p \in (18/7,3)$ and $R$ fixed.

\end{enumerate}
This case does not follow as above, since $\lim_{\la \to 0}
m(R,\la)= \lim_{\la \to 0} \bar{m}(R,\la)= -\infty$.

We make again the change of variables:
 $v(x)= \e^{\frac{2}{p-2}}u(\e
x)$, $\e = \lambda^{\frac{p-2}{4(3-p)}}$, to arrive to the
problem:

\begin{equation}\label{eqbola3}  \begin{array}{ll} - \Delta v + \e^2 v + \left ( v^2 \star \frac{1}{|x|} \right )
v=|v|^{p-2}v, & \mbox{ in }B(0,R/\e) \\ \  v(x)=0  & \mbox{ in }
\partial B(0,R/\e). \end{array} \end{equation}

As in the proof of Theorem \ref{teo4}, define $$ J_{\e}(v)= \frac
1 2 \int_{\R^3} \left ( |\nabla v|^2 + \e^2 |v|^2 \right )  dx +
\frac{1}{4} \intr \intr \frac{v^2(x) v^2(y)}{|x-y|}\, dx \, dy -
\frac{1}{p} \intr |v|^p\,dx. $$ Define $c(\e)= \inf
J_{\e}|_{H^1_0(B(0,R/\e))}$, $\bar{c}(\e)= \inf
J_{\e}|_{H^1_{0,r}(B(0,R/\e))}$. Since $J_{\e}(v)=
\e^{\frac{6-p}{p-2}}I_{\la}(u)$, we have that:

$$ c(\e)= \e^{\frac{6-p}{p-2}} m(\la,R),\ \ \bar{c}(\e)= \e^{\frac{6-p}{p-2}} \bar{m}(\la,R). $$

Observe now that $\bar{c}(\e) \geq \inf J_0|_E >-\infty$ (Theorem
\ref{teo3}). We now claim that $\lim_{\e \to 0} c(\e) = -\infty$,
and this finishes the proof.

Take $M>0$ arbitrary; by Theorem \ref{teo5}, $E$ is not included
in $L^p(\R^3)$, and there exists $u\in C^{\infty}_0(\R^3)$ such
that $\|u\|_E \leq 1$, $\intr |u|^p >M$. Now choose $\e>0$ small
enough such that:
$$supp \ u \subset
B(0,R/\e) \ \mbox{ and } \ \e^2 \intr |u|^2 \leq 1.$$

In such case, $u \in H^1_0(B(0,R/\e))$ and $c(\e)\leq J_{\e}(u)
\leq 3 -M$.

\end{proof}

\begin{remark} \label{yata} Being $u$ a nonradial minimizer of $I_{\la}|_{H_0^1(B(0,R))}$, we
have a family of minimizers $\{u \circ g: \ g \in O(N)\}$. This
implies that the minimum is degenerate, and hence it does not
satisfy the conditions of \cite{grillakis} for orbital stability.
Let us denote by $K$ the set of minimizers of
$I_{\la}|_{H_0^1(B(0,R))}$; then, $K$ is orbitally stable in the
sense of \cite{calions}.

\end{remark}

\begin{remark}
In the above result we have emphasized the breaking of symmetry of
the minimizer: let us now consider briefly the multiplicity of
positive solutions.

Under the conditions of Theorem \ref{teo7} we actually obtain the
existence of two solutions, a minimizer for $m(R,\la)$ and a
different one for $\bar{m}(R,\la)$. Moreover, $m(R, \la)$ is
negative and by taking $\la$ smaller (if necessary),
$\bar{m}(R,\la)$ is also negative. So, both minima are negative,
and hence they yield two positive nontrivial solutions.

Even more, observe that $0$ is a local minimum of
$I_{\la}|_{H^1(\R^3)}$. It is not difficult to show that the
Palais-Smale condition holds (remember that $I_{\la}$ is
coercive). The well-known mountain-pass theorem (\cite{a-rab})
implies the existence of a third nontrivial solution.

Everything said above can be applied to the functional:
$$ I^+_{\la}(u)= \frac 1 2 \int_{\R^3} \left ( |\nabla u|^2 + u^2 \right ) dx +
\frac{\la}{4} \intr \intr \frac{u^2(x) u^2(y)}{|x-y|}\, dx \, dy -
\frac{1}{p} \intr (u^+)^p\,dx. $$ By the maximum principle, we
obtain the existence of three positive solutions.

\end{remark}

{\bf Aknowledgement: } The author thanks Prof. Ireneo Peral for
many discussions on these problems during a stay in the
Universidad Aut{\'o}noma of Madrid, as well as for the warm
hospitality.

\end{document}